\newtheorem{theorem}{Theorem}
\newtheorem{lemma}{Lemma}
\newtheorem{corollary}{Corollary}
\newenvironment{proof} {{\bf Proof.}}{\hfill \fbox{}\\ \smallskip}
\newtheorem{remark}{Remark}}
\newtheorem{example}{Example}}
\newcommand{\C}{\mathbb{C}}
\newcommand{\n}{\nu}
\newcommand{\de}{\delta}
\newcommand{\ga}{\gamma}
\newcommand{\ep}{\varepsilon}
\newcommand{\Si}{\Sigma}
\newcommand{\vf}{\varphi}
\newcommand{\g}{\gamma}
\newcommand{\si}{\sigma}
\newcommand{\al}{\alpha}
\newcommand{\be}{\beta}
\newcommand{\lam}{\lambda}
\newcommand{\ze}{\zeta}
\newcommand{\Om}{\Omega}
\newcommand{\om}{\omega}
\newcommand{\ro}{\varrho}
\newcommand{\dive}{\mathop{\rm div}\nolimits}
\newcommand{\la}{\langle}
\newcommand{\ra}{\rangle}
\renewcommand\leq{\leqslant}
\renewcommand\geq{\geqslant}
\newcommand\lan{\langle}
\newcommand\ran{\rangle}
\newcommand\Cspt{C_{0}}
\newcommand\spt{\text{spt}\,}
\newcommand{\R}{\mathbb R}
\newcommand{\CC}{\mathbb{C}}
\newcommand{\Hspt}{\mathaccent"017{H}}
\renewcommand\Re{\mathop{\mathbb R \rm{e}}\nolimits}
\renewcommand\Im{\mathop{\mathbb I \rm{m}}\nolimits}
\newcommand\elle{\mathop{\mathscr L}\nolimits}
\newcommand\A{\mathop{\mathscr A}\nolimits}
\newcommand\B{\mathop{\mathscr B}\nolimits}
\newcommand\Sm{\mathop{\mathscr S}\nolimits}
\newcommand\Dm{\mathop{\mathscr D}\nolimits}
\newcommand{\BB}{\mathscr B}
\title{Criterion for the functional dissipativity of second order differential operators with complex
coefficients}
\author{A. Cialdea
\thanks{Department of Mathematics, Computer Sciences and Economics,
University of Ba\-si\-li\-ca\-ta, V.le dell'Ateneo Lucano, 10, 85100 Potenza, Italy.
 \textit{email:}
cialdea@email.it.}\and
V. Maz'ya 
\thanks{Department of Mathematics, Link\"oping University,
SE-581 83, Link\"oping, Sweden.
RUDN University,
6 Miklukho-Maklay St, Moscow, 117198, Russia.
\textit{email}: vladimir.mazya@liu.se.}
}
\date{}    
\begin{document}

\maketitle

{\small {\bf Abstract} In the present paper 
we consider the Dirichlet problem for the second order differential operator $E=\nabla(\A \nabla)$,
where $\A$ is a matrix with complex valued $L^\infty$ entries. 
We  introduce the concept of dissipativity of $E$ with respect to
a given function $\vf:\R^+ \to \R^+$.
Under the assumption that the $\Im \A$ is symmetric, we prove that the condition $|s\, \vf'(s)| \, | \la\Im \A (x)\, \xi,\xi\ra | 
\leq 2\, \sqrt{\vf(s)\, [s\, \vf(s)]'} 
\, \la \Re \A(x) \, \xi,\xi\ra $ (for almost every $x\in\Om\subset \R^N$ and for any $s>0$, $\xi\in\R^N$) is
necessary and sufficient for the functional dissipativity of $E$.

 {\bf Key Words:} functional dissipativity; second order differential operator with complex coefficients.

 {\bf AMS Subject Classification:} 47B44; 35L30.
}

 \section{Introduction}

 \subsection{Historical background}
 
 A linear operator $E$ defined on $D(E)\subset L^p(\Om)$ and with range in $L^p(\Om)$ is said to be $L^p$-dissipative if
 $$
 \Re \int_{\Om}\la Eu, u\ra\, |u|^{p-2}dx \leq 0
 $$
for any $u\in D(E)$.  Here $\Om$ is a domain in $\R^N$ and the functions $u$ are complex valued.
 
 Let $E$ be the scalar second order partial differential operator 
 \begin{equation}\label{eq:introE}
Eu= \nabla(\A \nabla u)
\end{equation}
where  $\A$ is a square matrix whose entries are complex valued $L^{\infty}$-functions.

The question of determining necessary and sufficient conditions for the $L^{p}$-dissipativity  ($1<p<\infty$)
of the Dirichlet problem in a domain $\Om\subset \R^{N}$ for the operator \eqref{eq:introE} was considered 
in our paper \cite{cialmaz}.   It is worthwhile to remark that we do not require ellipticity and we may deal with degenerating matrices.

 In particular we have proved that, if $\Im\A$ is symmetric,  the algebraic
condition
\begin{equation}\label{eq:intromain}
	|p-2| \, |\la \Im \A(x) \xi,\xi\ra | \leq 2 \sqrt{p-1} \la \Re\A (x)
\xi,\xi \ra
\end{equation}
for almost any $x\in \Om$ and for any $\xi\in\R^{N}$ is necessary and sufficient for the $L^{p}$-dissipativity of the Dirichlet problem 
for the  operator \eqref{eq:introE}.

We remark that, if $\Im\A$ is symmetric, \eqref{eq:intromain} is equivalent to
the condition
$$
 \frac{4}{p\,p'}\la \Re \A(x)\xi,\xi\ra + \lan \Re \A(x)\eta,\eta\ra
       -2(1-2/p) \la\Im\A(x)  \xi,\eta \ra \geq 0
$$
for almost any $x\in \Om$  and for any $\xi, \eta\in\R^{N}$.
More generally, if the matrix $\Im\A$ is not symmetric,
the  condition 
\begin{equation}\label{intro:form}
 \frac{4}{p\,p'}\la \Re \A(x)\xi,\xi\ra + \lan \Re \A(x)\eta,\eta\ra
       +2 \la (p^{-1}\Im\A(x) + p'^{-1}\Im\A^{*}(x)) \xi,\eta \ra \geq 0
\end{equation}
for almost any $x\in \Om$  and for any $\xi, \eta\in\R^{N}$  ($p'=p/(p-1)$) is only sufficient for the 
$L^p$-dissipativity.

 Condition \eqref{eq:intromain} can be used 
to obtain the sharp angle of dissipativity of  the operator \eqref{eq:introE}. 
To be more precise, we proved  that $zE\, (z \in \C)$ is $L^{p}$-dissipative if and only if $\vartheta_{-}\leq \arg z \leq \vartheta_{+}$, where $\vartheta_{-}$ and $\vartheta_{+}$ are explicitly given (see \cite{cialmaz2}).

If $\Im \A$ is not symmetric or the operator $E$ contains lower order terms 
 \begin{equation}\label{eq:lowerterms}
 Eu= \nabla(\A \nabla u)
+  { b}\nabla u + \nabla(c u)+au.
\end{equation}
condition \eqref{eq:intromain} is only necessary for $E$ to be $L^p$-dissipative.
However  we gave a 
necessary and sufficient condition
for the $L^{p}$-dissipativity of 
operator \eqref{eq:lowerterms} in $\R^{n}$ for
the particular case of constant coefficients (see \cite{cialmaz}).
 
If  operator \eqref{eq:lowerterms} has smooth coefficients and it is strongly elliptic,
then condition  \eqref{eq:intromain} 
is
necessary and sufficient for the $L^{p}$-quasi-dissipativity of 
$E$, i.e. 
for the
$L^{p}$-dissipativity of $E- \om I$, for a suitable $\om>0$.

We extended these results  to the class
of systems of partial differential operators of the form
\begin{equation}\label{eq:systnoelas}
Au=\partial_{h}(\A^{h}(x)\partial_{h}u)
\end{equation}
where $\A^{h}$ are $m\times m$ matrices whose elements
are complex valued $L^{1}_{{\rm loc}}$ functions  (see \cite{cialmaz2}).
We found that the operator $A$ is $L^{p}$-dissipative
if and only if
\begin{equation}\label{eq:condsystnoelas}
\begin{gathered}
 \Re \lan \A^{h}(x) \lam,\lam\ran-(1-2/p)^{2}\Re\lan 
     \A^{h}(x)\om,\om\ran (\Re \lan\lam,\om\ran)^{2}
     \\
     -
     (1-2/p)\Re(\lan \A^{h}(x)\om,\lam\ran -\lan \A^{h}(x)\lam,\om\ran)
     \Re \lan \lam,\om\ran   \geq 0
\end{gathered}
\end{equation}
 for almost every $x\in\Om$ and for any $\lam,\om\in\C^{m}$, $|\om|=1$,
 $h=1,\ldots,N$.
 We have determined also the angle of dissipativity for such operators.

 In the particular case of  positive real symmetric matrices 
 $\A^{h}$, we
 proved that  $A$ is 
    $L^{p}$-dissipative if and only if
    $$
    \left(\frac{1}{2}-\frac{1}{p}\right)^{2} (\mu_{1}^{h}(x)
	+\mu_{m}^{h}(x))^{2} \leq \mu_{1}^{h}(x)\mu_{m}^{h}(x)
    $$
    almost everywhere, $h=1,\ldots,N$,
    where $\mu_{1}^{h}(x)$ and $\mu_{m}^{h}(x)$ are 
    the smallest and the largest 
    eigenvalues of the matrix $\A^{h}(x)$ respectively. These results, obtained in  \cite{cialmaz2},
were new even for systems of ordinary differential
equations.

Peculiar results have been obtained for the system of  linear elasticity
(see \cite{cialmaz2,cialmaz3})
\begin{equation} \label{opelast}
    Eu=\Delta u + (1-2\nu)^{-1}\nabla \dive u
\end{equation}
($\nu$ being the Poisson ratio, $\nu>1$ or $\nu<1/2$),
which is not of the form 
\eqref{eq:systnoelas}.

In particular, for the planar elasticity, we proved 
 (see \cite{cialmaz2})
that operator \eqref{opelast} is $L^{p}$-dissipative if and only if
 \begin{equation}
     \left(\frac{1}{2}-\frac{1}{p}\right)^{2} \leq 
           \frac{2(\nu-1)(2\nu-1)}{(3-4\nu)^{2}}\, . 
     \label{condvecchiaintro}
 \end{equation}

In  \cite{cialmaz3}
 we showed that condition \eqref{condvecchiaintro}
is necessary for the $L^{p}$-dissipativity of operator
\eqref{opelast} in any dimension, even when the Poisson ratio is not constant.
At the present it is not known if condition 
\eqref{condvecchiaintro} is  also sufficient
for the $L^{p}$-dissipativity of elasticity operator
for $N> 2$, in particular for $N=3$. 
Nevertheless, in the same paper, we gave a more strict 
explicit condition which is sufficient 
for the $L^{p}$-dissipativity of \eqref{opelast}.
Indeed we proved that if
 $$
        (1-2/p)^{2}\leq
\begin{cases}
\displaystyle\frac{1-2\n}{2(1-\n)} & \text{if}\ \n<1/2\\
\\
\displaystyle\frac{2(1-\n)}{1-2\n}& \text{if}\ \n > 1,
\end{cases}
$$
 then  the operator \eqref{opelast} is $L^p$-dissipative.

In \cite{cialmaz3} we gave   necessary and
sufficient conditions for a weighted $L^p$-negativity of the
Dirichlet-Lam\'e operator, i.e.
for the validity of the inequality
\begin{equation}\label{eq:introneg}
    \int_{\Om}(\Delta u + (1-2\nu)^{-1}\nabla \dive u)\,
    |u|^{p-2}u\, \frac{dx}{|x|^\alpha} \leq 0
    \end{equation}
under the condition that the vector $u$ is rotationally invariant, i.e. $u$ depends only
on $\ro=|x|$ and $u_\ro$ is the only nonzero spherical
component of $u$. Namely we showed that \eqref{eq:introneg} holds for any such $u$
belonging to $(\Cspt^{\infty}(\R^{N}\setminus\{0\}))^{N}$ if and only if
$$
  -(p-1)(N+p'-2) \leq \alpha \leq N+p-2.
$$

We have considered also the
$L^p$-positivity  of  the fractional powers of the Laplacian $(-\Delta)^\alpha$ ($0<\alpha<1$)
for any $p\in (1,\infty)$ (see \cite[Section 7.6, pp.230--231]{cialmazb}. 
Specifically we have proved that
\begin{equation}\label{eq:fractional}
 \int_{\R^N} \lan (-\Delta)^\alpha u, u\ran |u|^{p-2} dx
   \geq  \frac{2\, c_\alpha}{p\, p'} \, \Vert |u|^{p/2}\Vert^2_{{\mathcal L}^{\alpha,2}(\R^N)}\, ,
\end{equation}
     for any real valued $u\in \Cspt^\infty(\R^N)$,  where
$$
    c_\alpha = -\pi^{-N/2} 4^{\alpha} \Gamma(\alpha+N/2)/ \Gamma(-\alpha) >0.
$$
and $\Vert v\Vert_{{\mathcal L}^{\alpha,2}}$ is the semi-norm
    $$
    \left(\int_{\R^N}\! \int_{\R^N} |v(x+t)-v(x)|^2 \frac{dxdt}{|t|^{N+2\alpha}}\right)^{1/2}.
    $$

All these results are collected in the monograph \cite{cialmazb} where they are considered in the more general frame 
of semi-bounded operators.

The $L^P$-dissipativity of the matrix operator
$$
Eu=\B^{h}(x)\partial_{h}u + \Dm(x) u\, ,
$$
where $\B^h(x)$  and 
$\Dm(x)$ are matrices with complex valued locally
integrable entries defined in the domain  $\Om$
of $\R^N$ and $u=(u_{1},\ldots,u_{m})$ ($1\leq i,j \leq m,\ 1\leq h 
\leq N$), is the subject of paper \cite{cialmaz4}.

We proved that, if $p\neq 2$, $E$ is $L^{p}$-dissipative if, and only if, 
\begin{equation}
    \B^{h}(x)=b_{h}(x) I \  \text{a.e.},
    \label{eq:0}
\end{equation} 
$b_{h}(x)$ being real valued locally integrable 
functions, and
the inequality
$$
\Re \la (p^{-1} \partial_{h}\B^{h}(x) - \Dm(x)) \zeta,\zeta\ra  \geq 0
$$
holds for any $\zeta\in \CC^{m}$, $|\zeta|=1$ and for almost any $x\in\Om$.
If $p=2$ condition \eqref{eq:0} is replaced
by the more general requirement that the matrices $\B^{h}(x)$ are  
self-adjoint a.e..
On combining this with the results we have previously obtained,
we deduced sufficient conditions for the $L^p$-dissipativity
of certain systems of partial differential operators
of the second order.

Paper \cite{cialmaz5} concerns the ``complex oblique
    derivative'' operator, i.e. the boundary
    operator
\begin{equation}\label{eq:odp}
\lambda\cdot\nabla u =
\frac{\partial u}{\partial x_{N}} +
\sum_{j=1}^{N-1}a_{j}\frac{\partial u}{\partial x_{j}}\, ,
\end{equation}
the coefficients $a_{j}$ being   complex valued $L^{\infty}$ functions
defined on $\R^{N-1}$.
We gave new necessary and, separately, sufficient conditions
for the $L^p$-dissipativity of  operator \eqref{eq:odp}.
In the case of real coefficients we provided a necessary and sufficient condition. 
Specifically we proved that, if $a_{j}$ are real valued, the operator $\lambda\cdot\nabla$ is $L^{p}$-dissipative if and only if
there exists a real vector $\Gamma\in L^2_{\text{loc}}(\R^N)$ such that
$$
    -\partial_{j} a_{j} \, \delta(x_n) \leq \frac{2}{p'}( \dive \Gamma - 
    |\Gamma|^{2})
$$
in the sense of distributions.

In the same paper we have considered also a class of integral
operators which can be
written as
\begin{equation}\label{intro:integralpos}
\int_{\R^{N}}^*[u(x)-u(y)]\, K(dx,dy)
\end{equation}
where the integral has to be understood as a principal value in the sense of Cauchy
and the kernel $K(dx,dy)$ is a Borel positive measure defined 
on $\R^N\times\R^N$ satisfying certain conditions. The class
of operators we considered includes the fractional powers of Laplacian
$(-\Delta)^\alpha$, with $0<\alpha<1$. We establish the $L^p$-positivity
of operator \eqref{intro:integralpos}, extending in this way \eqref{eq:fractional}.

We mention that 
{H\"omberg, Krumbiegel and Rehberg \cite{homb} used some of the techniques introduced in \cite{cialmaz} to show 
the $L^{p}$-dissipativity of a certain operator connected to the problem
of the existence of an optimal control for the heat equation with dynamic boundary condition.


Beyn and  Otten  \cite{beyn1,beyn2} considered the semilinear system
$$
A \Delta v(x)  +\la Sx, \nabla v(x)\ra + f(v(x))=0, \qquad  x\in R^N,
$$
where $A$ is a $m\times m$ matrix, $S$ is a $N\times N$ skew-symmetric matrix
and $f$ is a sufficiently smooth vector function. Among the assumptions they made, they require
the existence of a constant $\ga_A>0$ such that
$$
|z|^2 \Re \la w, Aw\ra + (p-2) \Re \la w,z\ra \Re \la z, Aw\ra \geq \ga_A |z|^{2} |w|^{2}
$$
for any $z,w\in \C^m$. This condition originates from our \eqref{eq:condsystnoelas}.

The results of \cite{cialmaz}
 allowed  Nittka \cite{nittka} to consider 
the case of partial differential operators with complex coefficients. 

Ostermann and  Schratz \cite{oster} obtained the stability  of a numerical procedure for
solving a certain evolution problem.  
The necessary and sufficient condition \eqref{eq:intromain}  show that
their  result does not require  the contractivity of the corresponding semigroup.
  
 Chill,  Meinlschmidt and Rehberg \cite{chill} used some ideas from \cite{cialmaz}
 in the study of the numerical range of second order elliptic operators with mixed boundary conditions in $L^p$.

 Coming back to scalar operators \eqref{eq:introE},
 let us consider the class of operators such that
the form \eqref{intro:form} is not merely
non-negative, but strictly positive, i.e. there exists $\kappa>0$ such that
\begin{equation}\label{intro:formk}
\begin{gathered}
 \frac{4}{p\,p'}\la \Re \A(x)\xi,\xi\ra + \lan \Re \A(x)\eta,\eta\ra
       +2 \la (p^{-1}\Im\A(x) + p'^{-1}\Im\A^{*}(x)) \xi,\eta \ra \\
        \geq \kappa(|\xi|^{2}+|\eta|^{2})
       \end{gathered}
\end{equation}
for almost any $x\in \Om$  and for any $\xi, \eta\in\R^{N}$. 
The class of operators \eqref{eq:lowerterms} whose principal part satisfies \eqref{intro:formk} and which  could be called \textit{$p$-strongly elliptic}, 
was recently considered by several authors.

Carbonaro and Dragi\v{c}evi\'c \cite{carbonaro,carbonaro2} 
 showed the validity of some so called bilinear embeddings related to 
boundary value problems with different boundary conditions for
second order complex coefficient operators satisfying condition \eqref{intro:formk}.
In a series of papers  \cite{dindos1,dindos2,dindos3,dindos4}
 Dindo\v{s} and Pipher
proved several results concerning the $L^p$ solvability of the Dirichlet problem
for the same class of operators.

Finally we mention that recently Maz'ya  and Verbitsky \cite{mazverb}
gave necessary and sufficient conditions for the accretivity of  a second order  partial differential operator $E$ 
containing lower order terms, in the case of Dirichlet data. We observe that the accretivity of $E$ is
equivalent to the $L^2$-dissipativity of $-E$.

\subsection{Functional dissipativity}
 
 A motivation for the study of $L^p$-dissipativity
comes from the decrease of the norm of solutions
of the Cauchy-Dirichlet problem
 \begin{equation}\label{eq:cauchy}
	 \begin{cases}
u'=Eu \\ 
u(0)=u_{0} \, . 
\end{cases}
\end{equation}

Here $u=u(x,t)$, $x\in\Om\subset \R^{N}$, $t>0$ and $u(x,t)=0$ on $\partial\Om$ in some sense
for $t>0$.
 By formal arguments, we have
\begin{equation}\label{eq:dt}
    \frac{d}{dt} \Vert u(\cdot, 
    t)\Vert_{p}^{\ p}=\frac{d}{dt}\int_{\Om}|u(x,t)|^{p}dx =
    p \Re \int_{\Om}\la \partial_{t}u,u\ra |u|^{p-2}dx,
\end{equation}
and then the inequality
$$
\Re \int_{\Om}\la Eu,u\ra |u|^{p-2}dx \leq 0.
$$
implies
the decrease of the $L^{p}$ norm of the solution of the Cauchy-Dirichlet 
problem \eqref{eq:cauchy}.

  More generally, 
let $\Phi$ be a Young function (a convex positive function such that $\Phi(0)=0$ and 
$\Phi(+\infty)=+\infty$) and
consider  the Orlicz  space of functions $u$ for which
there exists $\al>0$ such that
$$
\int_{\Om}\Phi(\al\, |u|)\, dx < +\infty\, .
$$

For the general theory of Orlicz spaces we refer to Krasnosel'ski\u{\i}, Ruticki\u{\i} \cite{krasno}
and Rao, Ren \cite{rao}.
As in \eqref{eq:dt}, if $u(x,y)$ is a solution of
the Cauchy-Dirichlet problem \eqref{eq:cauchy}, we have the decrease of the 
integrals 
$$
\int_{\Om}\Phi( |u(x,t)|)\, dx 
$$
 if
$$
\Re \int_{\Om} \la Eu,u\ra |u|^{-1}\Phi'( |u|)\, dx \leq 0.
$$

This implies  the decrease of the
Luxemburg norm in the related Orlicz space
$$
\Vert u(\cdot, 
    t)\Vert =
\inf \left\{ \lam >0\ |\ \int_{\Om}\Phi(|u(x,t)|/\lam)\, dx \leq 1 
\right\}.
$$

 The aim of the present paper is to
 find conditions for the positive function $\vf$ defined on $(0,+\infty)$
 to satisfy the inequality
\begin{equation}\label{eq:intro1}
\Re \int_{\Om} \la Eu,u\ra\, \vf(|u|)\, dx \leq 0
\end{equation}
for any complex valued $u$ in a certain class, $\Om$ being a domain in $\R^N$. 
Here $E$ is the scalar  operator \eqref{eq:introE}.

In integrals like \eqref{eq:intro1} the combination $\vf(|u|)u$ in 
the integrand is taken 
to be zero where $u$ vanishes, even if the function $\vf(s)$ is not defined at $s=0$.
If $\vf(t)=t^{p-2}$ ($p>1$) we recover the concept of $L^p$-dissipativity.

We remark that the relation between the function $\vf$ in \eqref{eq:intro1}
and $\Phi$ is
\begin{equation}\label{eq:Phiphi}
\vf(t) = \frac{\Phi'(t)}{t}\quad \iff \quad 
\Phi(t)=\int_{0}^{t} s\, \vf(s)\, ds\, 
\end{equation}
and the convexity of $\Phi$ is equivalent to the increase of $s\, \vf(s)$.

If \eqref{eq:intro1} holds for a general $\vf$,
we say that the operator $E$ is \textit{functional dissipative} or $L^{\Phi}$\textit{-dissipative},
 in analogy with the 
terminology used when  $\vf(t)=t^{p-2}$.

More precisely, in the present paper we consider the partial differential operator
\eqref{eq:introE}
and consider the corresponding sesquilinear form
$$
\elle(u,v) = \int_{\Om}\la \A \nabla u, \nabla v \ra\, dx.
$$

We look for the conditions under which the operator $E$
 is $L^{\Phi}$-dissipative, i.e.
$$
\Re \int_\Om \la \A \nabla u, \nabla(\vf(|u|)\, u)\ra\, dx \geq 0
$$
for any $u\in \Hspt^{1}(\Om)$ such that $\vf(|u|)\, u \in \Hspt^{1}(\Om)$.

In the present paper we have considered Dirichlet problem but in principle this notion could be extended to other 
boundary value conditions.

\subsection{The main result}

Let us formulate the main result of the present paper. 
Under the assumption that the matrix $\Im A$ is symmetric,
we prove that  the operator \eqref{eq:introE} is $L^{\Phi}$-dissipative 
 if and only if
\begin{equation}\label{eq:introcnes}
|s\, \vf'(s)| \, | \la\Im \A (x)\, \xi,\xi\ra | 
\leq 2\, \sqrt{\vf(s)\, [s\, \vf(s)]'} 
\, \la \Re \A(x) \, \xi,\xi\ra 
\end{equation}
for almost every $x\in\Om$ and for any $s>0,  \xi\in\R^{N}$.
The function $\vf$ is a positive function defined on $\R^{+}$ such that $s\,\vf(s)$
is strictly increasing. The precise conditions we require on the function $\vf$ are specified
later (see section \ref{subsec:phi}). If $\Im \A$ is not symmetric 
condition \eqref{eq:introcnes} is only necessary for $E$ to be $L^\Phi$-dissipative.

Condition \eqref{eq:introcnes} is equivalent to
$$
{}[1-\Lambda^{2}(t)] \la\Re \A(x)\, \xi,\xi\ra 
+ \la\Re \A(x)\, \eta,\eta\ra 
+2\, \Lambda(t) \la \Im \A(x)\, \xi,\eta\ra 
\geq 0
$$
for almost every $x\in\Om$ and for any $t>0,  \xi, \eta \in \R^{N}$, where $\Lambda$ is the function defined
by the relation
$$\Lambda\left(s\sqrt{\vf(s)}\right)= - \frac{s\, \vf'(s)}{s\,\vf'(s)+2\, 
\vf(s)}\, .
$$

Note that if $\vf(s)=s^{p-2}$, this function is constant and  $\Lambda(t)=-(1-2/p)$,
$1-\Lambda^{2}(t)=4/(p\, p')$.
As for \eqref{intro:form}, if $\Im \A$ is not symmetric, the condition
\begin{equation}\label{eq:Phise0}
\begin{gathered}
{}[1-\Lambda^{2}(t)] \la\Re \A(x)\, \xi,\xi\ra 
+ \la\Re \A(x)\, \eta,\eta\ra +\\
[1+\Lambda(t)] \la \Im \A(x)\, \xi,\eta\ra 
+
[1-\Lambda(t)] \la \Im \A^{*}(x)\, \xi,\eta\ra
\geq 0
\end{gathered}
\end{equation}
for almost every $x\in\Om$ and for any $t>0,  \xi, \eta \in \R^{N}$,
is only sufficient for the $L^{\Phi}$-dissipativity.

If the principal part of operator \eqref{eq:lowerterms} is such that
the left-hand side of \eqref{eq:Phise0} is not merely non negative but strictly positive, i.e. 
\begin{equation}\label{eq:Phise}
\begin{gathered}
{}[1-\Lambda^{2}(t)] \la\Re \A(x)\, \xi,\xi\ra 
+ \la\Re \A(x)\, \eta,\eta\ra +\\
[1+\Lambda(t)] \la \Im \A(x)\, \xi,\eta\ra 
+
[1-\Lambda(t)] \la \Im \A^{*}(x)\, \xi,\eta\ra
\geq \kappa (|\xi|^2 + |\eta|^2)
\end{gathered}
\end{equation}
for a certain $\kappa>0$ and for almost every $x\in\Om$ and for any $t>0,  \xi, \eta \in \R^{N}$,
we say that the operator $E$ is $\Phi$-strongly elliptic.

\subsection{Structure of the paper}

The present paper is organized as follows.

After the short preliminary Section \ref{sec:prelim}, in Section \ref{sec:phi} we specify the class of functions $\vf$ we are going to consider and 
introduce some related functions.

Section \ref{sec:lemma} is devoted to prove a technical lemma concerning real bilinear forms, which will be used
later, in the proof of the main result.

In Section \ref{sec:funcdiss} we give necessary and sufficient conditions for the $L^{\Phi}$-dis\-si\-pativity. Specifically we prove
the equivalence between the $L^{\Phi}$-dissipativity of the operator $E$  and the positiveness of a certain form in $\Hspt^1(\Om)$.
We remark that a similar result holds also for second order differential operators with lower order terms, in analogy with 
\cite[Lemma 1, p.1070]{cialmaz}. 
This can be proved
with the same technique, but for the sake of simplicity here we have preferred to  avoid such a more general formulation.
The section ends with a lemma concerning $\Phi$-strongly elliptic operators.

The main result concerning
condition \eqref{eq:introcnes} is proved in Section \ref{sec:cnes}. We give also some examples showing that in some cases
only real nonnegative operators are $L^{\Phi}$-dissipative, while in other cases the $L^{\Phi}$-dissipativity
is equivalent to the algebraic condition 
$$
\lam_{0}\,  | \la\Im \A (x)\, \xi,\xi\ra | 
\leq 
 \la \Re \A(x) \, \xi,\xi\ra 
$$
for almost any $x\in\Om$ and for any $\xi\in\R^{N}$, where the constant $\lam_0$ is explicitly determined.

\section{Preliminaries and notations}\label{sec:prelim}

Let $\Om$ be an open set in $\R^{N}$.
As usual, by $\Cspt^\infty(\Om)$ we denote the space of  complex valued $C^{\infty}$ functions
having compact support in $\Om$
and by
 $\Hspt^1(\Om)$ the closure of $\Cspt^\infty(\Om)$ in the norm
 $$
 \int_{\Om}(|u|^2  + |\nabla u|^2)dx, 
 $$
$\nabla u$ being the gradient of the function $u$.

The inner product either in
$\C^{N}$ or in $\C$ is denoted by $\la \cdot, \cdot \ra$  and the bar denotes complex conjugation.

In what follows, $\A$ is a $N\times N$ matrix
function with complex valued entries $a_{hk}\in L^{\infty}(\Om)$,
$\A^{t}$ is its transposed matrix and $\A^{*}$ is its adjoint
matrix, i.e. $\A^{*}=\overline{\A}^{t}$.


Let $\elle$ be the sesquilinear form
$$
\elle(u,v) = \int_{\Om} \la \A \nabla u, \nabla v\ra\, dx \, .
$$

We say that the operator $E$  is $L^{\Phi}$-dissipative if
\begin{equation}\label{eq:defdiss0}
\Re \int_\Om \la \A \nabla u, \nabla(\vf(|u|)\, u)\, dx \geq 0
\end{equation}
for any $u\in \Hspt^{1}(\Om)$ such that $\vf(|u|)\, u \in \Hspt^{1}(\Om)$.

Here $\vf$ is a positive function defined on $\R^{+}=(0,+\infty)$.
In the next section  we specify the conditions we require on $\vf$.

In the sequel we shall sometimes use the following notations.  Given two functions 
$F$ and $G$ defined on a set $Y$,
writing $|F(y)| \lesssim |G(y)|$ we mean that there exists a 
positive constant  $C>0$ such that $|F(y)| \leq C\, |G(y)|$
for any $y\in Y$.  If $|F(y)| \lesssim |G(y)|$ and $|G(y)| \lesssim |F(y)|$ 
we shall write $F(y) \simeq G(y)$.

\section{The function $\vf$ and related functions}\label{sec:phi}

In this Section we introduce the class of function  $\vf$ 
with respect to which we consider the $L^{\Phi}$-dissipativity.  We also introduce other functions
related to $\vf$ and prove some of their properties.

\subsection{The functions $\vf$ and $\psi$}\label{subsec:phi}

The positive function $\vf$ is required to satisfy the following conditions

\begin{enumerate}
	\item\label{item1} $\vf \in C^{1}((0,+\infty))$;
	\item\label{item2} $(s\, \vf(s))'>0$ for any $s>0$;
	\item the range of the strictly increasing function $s\, \vf(s)$ is  $(0,+\infty)$;	
	\item\label{item4} there exist two positive constants $C_{1}, C_{2}$  and a real number $r>-1$ such that
\begin{equation}\label{eq:condsfi}
C_{1} s^{r}\leq (s\vf(s))' \leq C_{2}\, s^{r}, \qquad s\in (0,s_{0})
\end{equation}
for a certain $s_{0}>0$. If $r=0$ we require more restrictive 
conditions: there exists the finite limit $\lim_{s\to 
0^+}\vf(s)=\vf_{+}(0)>0$ 
and  $\lim_{s\to 0^+}s\, \vf'(s)=0$.
\item\label{item5} 
There exists $s_{1}>s_{0}$ such that 
\begin{equation}
   \vf'(s)\geq 0  \text{ or }  \vf'(s)\leq 0 \qquad \forall\ s\geq s_{1}
   . \label{eq:newcond1}
\end{equation}
\end{enumerate}

The condition \ref{item4} prescribes the behaviour of the function $\vf$ in a neighborhood of the origin,
while \ref{item5} concerns the behaviour for large $s$. 

The function $\vf(s)=s^{p-2}$ ($p>1$) provides an example of such a function.
Other examples can be found at the end of the paper. 

From condition  \ref{item4} it follows that, for any $r>-1$, 
	\begin{equation}\label{eq:condfi}
		\vf(s)  \simeq s^{r}, \qquad s\in (0,s_{0}).
\end{equation}

Let us denote by $t\, \psi(t)$ the inverse function of $s\, \vf(s)$. 
The functions
$$
\Phi(s)=\int_{0}^{s} \si\, \vf(\si)\, d\si, \qquad
\Psi(s)= \int_{0}^{s} \si\, \psi(\si)\, d\si
$$
are conjugate Young functions.

\begin{lemma}\label{lemma:new}
The function $\vf$ satisfies conditions \ref{item1}-\ref{item5}
if and only if the function $\psi$ satisfies
the same conditions with $-r/(r+1)$ instead of $r$.
\end{lemma}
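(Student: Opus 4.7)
The plan is to exploit the inverse relationship directly. From the definition $t\,\psi(t)=g(t)$, where $g$ is the inverse of $s\,\vf(s)$, setting $t=s\,\vf(s)$ gives $s=t\,\psi(t)$, hence the pointwise identity
$$
\psi(s\,\vf(s))=\frac{1}{\vf(s)}, \qquad s>0,
$$
and, differentiating $g\circ(s\,\vf(s))=s$, the derivative identity $(t\,\psi(t))'=1/(s\,\vf(s))'$ evaluated at $s=g(t)$. Conditions \ref{item1}, \ref{item2}, \ref{item3} for $\psi$ are then immediate from the inverse function theorem: $g$ is $C^1$ with strictly positive derivative on $(0,\infty)$ and range $(0,\infty)$, so the same holds for $t\,\psi(t)$, and $\psi(t)=g(t)/t$ is itself $C^1$ on $(0,\infty)$.

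The core of the argument is condition \ref{item4}. Assume first $r>-1$. From \eqref{eq:condfi}, $\vf(s)\simeq s^{r}$ near $0$, so $t=s\,\vf(s)\simeq s^{r+1}$; since $r+1>0$ the map $s\mapsto s^{r+1}$ is a homeomorphism near $0$, giving $s\simeq t^{1/(r+1)}$ for $t$ in some $(0,t_0)$. Substituting into the derivative identity,
$$
(t\,\psi(t))'=\frac{1}{(s\,\vf(s))'}\simeq s^{-r}\simeq t^{-r/(r+1)},
$$
which is exactly \eqref{eq:condsfi} for $\psi$ with exponent $r'=-r/(r+1)>-1$. In the borderline case $r=0$, the hypotheses $\vf_+(0)>0$ and $s\,\vf'(s)\to 0$ yield $(s\,\vf(s))'\to\vf_+(0)$, hence $s\simeq t/\vf_+(0)$ and $\psi(t)\to 1/\vf_+(0)>0$ as $t\to 0^+$; moreover a direct computation gives
$$
t\,\psi'(t)=-\frac{s\,\vf'(s)}{\vf(s)\bigl(\vf(s)+s\,\vf'(s)\bigr)}\longrightarrow 0,
$$
which is the required refined condition for $r'=0$.

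For condition \ref{item5}, the same formula
$$
\psi'(t)=-\frac{\vf'(s)}{\vf(s)^{2}\,(s\,\vf(s))'}
$$
shows that $\psi'$ and $\vf'$ have opposite signs on the range of $s\,\vf(s)$ corresponding to $s\geq s_1$, so \eqref{eq:newcond1} for $\vf$ on $[s_1,\infty)$ transfers to the analogous one-signedness of $\psi'$ on $[t_1,\infty)$ with $t_1=s_1\vf(s_1)$.

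Finally, the converse direction requires no new work: the inversion relation is symmetric, so $s\,\vf(s)$ is likewise the inverse of $t\,\psi(t)$, and the involution $r\mapsto -r/(r+1)$ satisfies $-r'/(r'+1)=r$, so applying the already-proved implication to $\psi$ recovers the conditions for $\vf$. The only step that requires real care is the passage in condition \ref{item4} from a two-sided power bound on $(s\,\vf(s))'$ to a two-sided power bound on $(t\,\psi(t))'$; I expect this to be routine once one observes that $\vf(s)\simeq s^{r}$ follows from \eqref{eq:condsfi} by integration, allowing the substitution $s\simeq t^{1/(r+1)}$ to be inserted without loss.
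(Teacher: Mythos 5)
Your proposal is correct and follows essentially the same route as the paper: both rest on the identities $\psi[s\,\vf(s)]=1/\vf(s)$ and $(t\,\psi(t))'=1/(s\,\vf(s))'$, derive the asymptotics $s\simeq t^{1/(r+1)}$ near $0$ to transfer condition \ref{item4} with exponent $-r/(r+1)$, handle the case $r=0$ via the limits $\psi(t)\to 1/\vf_{+}(0)$ and $t\,\psi'(t)\to 0$, obtain the sign reversal of $\psi'$ for condition \ref{item5} from $\psi'[s\,\vf(s)](s\,\vf(s))'=-\vf'(s)/\vf^{2}(s)$, and conclude the converse from the involutive character of the inversion and of $r\mapsto -r/(r+1)$. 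The step you flag as needing care is exactly the one the paper also treats, and your argument for it is sound.
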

\begin{proof}
Inequalities \eqref{eq:condsfi}
and \eqref{eq:condfi} imply
$$
\psi(t) \simeq t^{-r/(1+r)}, \quad
 (t\, \psi(t))' \simeq
t^{-r/(1+r)}\, \qquad t\in(0,t_{0})
$$
for a certain $t_{0}>0$.

Since
$-r/(1+r)>-1$, the function $\psi$ satisfy the conditions 
\ref{item1}-\ref{item4} with $-r/(1+r)$ instead of $r$. In the particular case $r=0$ this follows 
from the equality $t\,\psi(t)\, \vf[t\, \psi(t)]=t$ ($t>0$), which 
implies 
\begin{gather*}
\lim_{t\to 0^{+}}\psi(t)=\frac{1}{\vf_{+}(0)}>0 \, ,\\
\lim_{t\to 0^{+}}t\, \psi'(t) = \lim_{t\to 0^{+}}(\, (t\, \psi(t))' - 
\psi(t)\, ) = \lim_{s\to 0^{+}}\frac{1}{s\vf'(s)+\vf(s)}- 
\frac{1}{\vf_{+}(0)}=0\, .
\end{gather*}

Since $s\, \vf(s)\, \psi[s\, \vf(s)]=s$, we find
$\psi[s\, \vf(s)]=1/\vf(s)$ and then
\begin{equation}\label{eq:psi'}
\psi'[s\, \vf(s)](s\, \vf(s))'= -\frac{\vf'(s)}{\vf^{2}(s)}\,.
\end{equation}

Keeping in mind condition \ref{item2}, we have that
 the function $\psi'$ satisfies condition 
\eqref{eq:newcond1}
 for $t$ greater than  $t_{1}=s_{1}\vf(s_{1})$, but with an opposite sign. 

The viceversa is now obvious, since $- (-r/(1+r)/(1-r/(1+r))=r$.
\end{proof}

\subsection{Some auxiliary functions}\label{subsec:aux}

The function  $s\sqrt{\vf(s)}$ is strictly 
increasing. 
Let $\ze(t)$  be its inverse, i.e. 
$\ze(t) = \left(s\sqrt{\vf(s)}\right)^{-1}$.
The range of $s\sqrt{\vf(s)}$
is $(0,+\infty)$ and $\ze(t)$ belongs to $C^{1}((0,+\infty))$.

Define
\begin{equation}\label{eq:defHGA}
\quad \Theta(t) = \ze(t)/t; \quad \Lambda(t)=t\, \Theta'(t)/\Theta(t)\, .
\end{equation}

From  \eqref{eq:condfi} it follows that there
exists a constant $K>0$ such that
\begin{equation}\label{eq:dispsi}
\ze(t) \leq K \, t^{2/(2+r)}, \quad \Theta(t)\leq K 
t^{-r/(2+r)}, \qquad t\in(0,t_{0})
\end{equation}
for a certain $t_{0}>0$.

We have also
\begin{equation}\label{eq:H=}
\Theta \left(s\sqrt{\vf(s)}\right)=1/\sqrt{\vf(s)}; \quad
	\Theta'\left(s\sqrt{\vf(s)}\right)=- \frac{\vf'(s)}{\vf(s)\,[s\,\vf'(s)+2\, \vf(s)]}\, .
\end{equation}

Note that condition \ref{item2} implies 
$$
s\,\vf'(s)+2\, \vf(s) >0, \qquad s\in(0,+\infty).
$$

	We can write
\begin{gather}
\Lambda\left(s\sqrt{\vf(s)}\right)= - \frac{s\, \vf'(s)}{s\,\vf'(s)+2\, 
\vf(s)} ,
\label{eq:G=}\\
1-\Lambda\left(s\sqrt{\vf(s)}\right)= 2\frac{s\,\vf'(s)+\, \vf(s)}{s\,\vf'(s)+2\, 
\vf(s)}> 0, \nonumber\\
 1+\Lambda\left(s\sqrt{\vf(s)}\right)= \frac{2\, 
\vf(s)}{s\,\vf'(s)+2\, \vf(s)}>0 \, , \nonumber
\end{gather}
from which it follows
\begin{equation}\label{eq:1-G2}
1-\Lambda^{2}\left(s\sqrt{\vf(s)}\right)= \frac{4\, 
\vf(s)\, (s\,\vf'(s)+\, \vf(s))}{(s\,\vf'(s)+2\, \vf(s))^{2}}\, ,
\end{equation}
and
\begin{equation}\label{eq:G<1}
-1< \Lambda(t) < 1
\end{equation}
for any $t>0$. This, together with \eqref{eq:dispsi}, implies 
\begin{equation}\label{eq:disH'}
|\Theta'(t)| \leq \Theta(t)/t \leq K\, t^{-2(1+r)/(2+r)}
\end{equation}
for $t\in(0,t_{0})$.

Finally we give two equalities we shall use later. 
The first equality in \eqref{eq:H=} can be rewritten as
\begin{equation}\label{eq:H2psi}
\Theta^{2}(t)\, \vf[\ze(t)] =1 \, ,
\end{equation}
for any $t>0$, which leads to
$$
2\, \Theta(t)\, \Theta'(t) \vf[\ze(t)] + \Theta^{2}(t)\, \vf'[\ze(t)]\, 
\ze'(t) =0 
$$
and then
$$
\Theta(t)\, \vf'[\ze(t)] \ze'(t) + \Theta'(t)\, \vf[\ze(t)] = - \Theta'(t)\, 
\vf[\ze(t)] = -\Theta'(t)/\Theta^{2}(t).
$$

Since $\ze'(t)=t\, \Theta'(t) + \Theta(t)$ we have also
\begin{equation}\label{eq:tH'}
\Theta(t)\, \vf'[\ze(t)]\, [t\, \Theta'(t) + \Theta(t)] + \Theta'(t)\, \vf[\ze(t)] =
-\Theta'(t)/\Theta^{2}(t)
\end{equation}
for any $t>0$.

\begin{lemma}
Let $\widetilde{\ze}(t)$ the inverse function of $t\, \sqrt{\psi(t)}$ 
and define, as in \eqref{eq:defHGA},
$$
\quad \widetilde{\Theta} (t) = \widetilde{\ze}(t)/t\,; \quad \widetilde{\Lambda}(t)=t\, 
\widetilde{\Theta}'(t)/\widetilde{\Theta}(t)\, .
$$
We have
\begin{equation}\label{eq:GAtilde}
\widetilde{\Theta} (t) = \frac{1}{\Theta(t)}\, , \qquad 
\widetilde{\Lambda}(t)= - \Lambda(t)
\end{equation}
for any $t>0$.
\end{lemma}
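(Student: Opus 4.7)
The plan is to exploit the duality between $\vf$ and $\psi$ via the substitution $t = s\,\vf(s)$, with inverse $s = t\,\psi(t)$. By Lemma \ref{lemma:new}, $\psi$ satisfies conditions \ref{item1}--\ref{item5}, so in particular $t\sqrt{\psi(t)}$ is strictly increasing from $(0,+\infty)$ onto $(0,+\infty)$ and $\widetilde{\ze}$, $\widetilde{\Theta}$, $\widetilde{\Lambda}$ are well-defined and belong to $C^1((0,+\infty))$, exactly as their untilded counterparts.

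The key observation I would make first is the identity $\psi[s\,\vf(s)] = 1/\vf(s)$, which follows from the definition of $\psi$ as the inverse of $s\,\vf(s)$ (and was already used in \eqref{eq:psi'}). Setting $t = s\,\vf(s)$ this gives $\sqrt{\psi(t)} = 1/\sqrt{\vf(s)}$, and therefore
$$
t\sqrt{\psi(t)} \;=\; \frac{s\,\vf(s)}{\sqrt{\vf(s)}} \;=\; s\sqrt{\vf(s)}\,.
$$
In other words, the two strictly increasing functions $s\sqrt{\vf(s)}$ and $t\sqrt{\psi(t)}$ take a common value $u$ whenever $s$ and $t$ are related by $t = s\,\vf(s)$, i.e. whenever $\ze(u) = s$ and $\widetilde{\ze}(u) = t$. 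Multiplying these identities gives
$$
\ze(u)\,\widetilde{\ze}(u) \;=\; s\,t \;=\; s^{2}\vf(s) \;=\; \bigl(s\sqrt{\vf(s)}\bigr)^{2} \;=\; u^{2},
$$
and dividing both sides by $u^{2}$ yields $\Theta(u)\,\widetilde{\Theta}(u) = 1$, which is the first equality in \eqref{eq:GAtilde}.

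The second equality then follows by a short computation: differentiating $\widetilde{\Theta}(t) = 1/\Theta(t)$ gives $\widetilde{\Theta}'(t) = -\Theta'(t)/\Theta^{2}(t)$, and substituting into the definition of $\widetilde{\Lambda}$ in \eqref{eq:defHGA} gives
$$
\widetilde{\Lambda}(t) \;=\; \frac{t\,\widetilde{\Theta}'(t)}{\widetilde{\Theta}(t)} \;=\; \frac{t\bigl(-\Theta'(t)/\Theta^{2}(t)\bigr)}{1/\Theta(t)} \;=\; -\frac{t\,\Theta'(t)}{\Theta(t)} \;=\; -\Lambda(t).
$$

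There is no real obstacle here: the whole argument reduces to the symmetric identity $t\sqrt{\psi(t)} = s\sqrt{\vf(s)}$ under the involution $t = s\,\vf(s)$. The only point requiring a word of justification is that all the objects in sight exist and are $C^{1}$, which is guaranteed by Lemma \ref{lemma:new} applied to $\psi$.
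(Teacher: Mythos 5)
Your proof is correct and follows essentially the same route as the paper: both arguments rest on the identity $\psi[s\,\vf(s)]=1/\vf(s)$ and the resulting relation $t\sqrt{\psi(t)}=s\sqrt{\vf(s)}$ under $t=s\,\vf(s)$, differing only cosmetically in whether one writes the conclusion as $\ze(u)\,\widetilde{\ze}(u)=u^{2}$ or chains the equalities through $\Theta(s\sqrt{\vf(s)})=1/\widetilde{\Theta}(t\sqrt{\psi(t)})$. The derivation of $\widetilde{\Lambda}=-\Lambda$ by differentiating $\widetilde{\Theta}=1/\Theta$ is exactly what the paper leaves implicit.
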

\begin{proof}
The function $t\, \psi(t)$ being the inverse of $s\, \vf(s)$, we can 
write
\begin{equation}\label{eq:psifi=1}
	\vf(s)\, \psi[s\, \vf(s)]=1\, , \forall\ s>0.
\end{equation}

From this and \eqref{eq:H=} we deduce
$$
\Theta(s\, \sqrt{\vf(s)}) = \frac{1}{\sqrt{\vf(s)}}= \sqrt{\psi[s\, 
\vf(s)]}=
\sqrt{\psi(t)}= \frac{1}{\widetilde{\Theta}(t\, \sqrt{\psi(t)})}
$$
where we have set $t= s\, \vf(s)$. On the other hand, keeping in mind 
\eqref{eq:psifi=1}, we have
\begin{equation}\label{eq:sqrt}
t\, \sqrt{\psi(t)} = s\, \vf(s)\, \sqrt{\psi(s\, \vf(s))} = s\, 
\sqrt{\vf(s)}\, .
\end{equation}

The first equality in \eqref{eq:GAtilde} is proved and
the second one follows at once.
\end{proof}

\subsection{A Lemma concerning Sobolev spaces}

We conclude this Section with the next Lemma which guarantees that
the function $\sqrt{\vf(|u|)}\, u$ belongs to the Sobolev space $H^{1}(\Om)$ or $\Hspt^{1}(\Om)$.

\begin{lemma}\label{lemma:fipsi}
If $u\in H^{1}(\Om)$ ($\Hspt^{1}(\Om)$) is such that $\vf(|u|)\, u\in 
H^{1}(\Om)$ ($\Hspt^{1}(\Om)$), then $\sqrt{\vf(|u|)}\, u$ belongs to $H^{1}(\Om)$ 
($\Hspt^{1}(\Om)$).
\end{lemma}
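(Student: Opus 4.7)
Set $v := \vf(|u|)\,u$ and $w := \sqrt{\vf(|u|)}\,u$. Since $|w|^2 = \vf(|u|)\,|u|^2 = |u|\,|v|$, Cauchy-Schwarz immediately gives
$$\int_\Om |w|^2\,dx \;\leq\; \|u\|_{L^2(\Om)}\,\|v\|_{L^2(\Om)} \;<\; \infty,$$
so $w \in L^2(\Om)$.

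The heart of the argument is the pointwise identity, valid on $\{u\neq 0\}$,
$$\nabla w \;=\; \tfrac{1}{2}\sqrt{\vf(|u|)}\,\nabla u + \tfrac{1}{2\sqrt{\vf(|u|)}}\,\nabla v,$$
which I would derive by computing, via the chain rule,
$$\nabla w = \sqrt{\vf(|u|)}\,\nabla u + \tfrac{\vf'(|u|)}{2\sqrt{\vf(|u|)}}\,u\,\nabla|u|, \qquad \nabla v = \vf(|u|)\,\nabla u + \vf'(|u|)\,u\,\nabla|u|,$$
and eliminating the common factor $\vf'(|u|)\,u\,\nabla|u|$. Squaring the identity yields
$$|\nabla w|^2 \;=\; \tfrac{1}{4}\vf(|u|)|\nabla u|^2 + \tfrac{1}{4\vf(|u|)}|\nabla v|^2 + \tfrac{1}{2}\Re\la\nabla u,\nabla v\ra,$$
in which the cross term is automatically in $L^1(\Om)$ by Cauchy-Schwarz.

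To justify these formal manipulations rigorously I would truncate $u$ to an annulus $\{1/k\leq |u|\leq k\}$: fix a smooth cut-off $\eta_k\in C^\infty([0,\infty))$ supported in $[1/(2k),2k]$ with $\eta_k=1$ on $[1/k,k]$, and set $w_k := \eta_k(|u|)\,w$. On $\mathrm{supp}\,\eta_k$, the function $\vf$ is bounded above and below by positive constants, so the map $z\mapsto \eta_k(|z|)\sqrt{\vf(|z|)}\,z$ is Lipschitz from $\mathbb C$ to $\mathbb C$; the chain rule for Sobolev functions therefore gives $w_k\in H^1(\Om)$, and $w_k\in \Hspt^1(\Om)$ whenever $u\in \Hspt^1(\Om)$ (the Lipschitz map vanishes at $0$, which preserves $C_0^\infty$-approximability). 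Direct computation gives
$$\nabla w_k = \eta_k(|u|)\bigl(\tfrac{1}{2}\sqrt{\vf(|u|)}\,\nabla u + \tfrac{1}{2\sqrt{\vf(|u|)}}\,\nabla v\bigr) + \eta_k'(|u|)\sqrt{\vf(|u|)}\,u\,\nabla|u|.$$

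The principal obstacle is to obtain a uniform $L^2$-bound on $\nabla w_k$ so as to extract a weak $H^1$-limit. Convergence $w_k\to w$ in $L^2(\Om)$ is immediate by dominated convergence using $|w|^2\in L^1(\Om)$. For the gradients, conditions \ref{item4} and \ref{item5} are essential: the asymptotics $\vf(s)\simeq s^r$ with $r>-1$ for $s$ near $0$ force the inner boundary layer $\{1/(2k)\leq|u|\leq 1/k\}$ to contribute negligibly to the cut-off correction $\eta_k'(|u|)\sqrt{\vf(|u|)}\,u\,\nabla|u|$, and the eventual monotonicity of $\vf$ for $s$ large does the same for the outer layer $\{k\leq|u|\leq 2k\}$. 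The squared-identity is then controlled uniformly by pairing the (not individually integrable) terms $\int \vf(|u|)|\nabla u|^2$ and $\int|\nabla v|^2/\vf(|u|)$ against the integrable cross term $\int\Re\la\nabla u,\nabla v\ra$, which is bounded by $\|\nabla u\|_{L^2}\|\nabla v\|_{L^2}$. Weak $H^1$-compactness then produces a limit, identified with $w$ by uniqueness of the $L^2$-limit, so $w\in H^1(\Om)$ (resp.\ $\Hspt^1(\Om)$).
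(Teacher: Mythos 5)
Your algebraic identity $\nabla w=\tfrac12\sqrt{\vf(|u|)}\,\nabla u+\tfrac{1}{2\sqrt{\vf(|u|)}}\,\nabla v$ is correct, the $L^2$-bound on $w$ is fine, and the Lipschitz truncation $w_k=\eta_k(|u|)w$ is a sensible device (it would even let you bypass the paper's appeal to Deny--Hedberg for the $\Hspt^1$ case, since $\Hspt^1(\Om)$ is weakly closed). But the central step --- the uniform $L^2$-bound on $\nabla w_k$ --- is not proved, and the mechanism you propose for it is circular. Expanding your squared identity pointwise gives
\begin{equation*}
|\nabla w|^2=\Re\la\nabla u,\nabla v\ra+\frac{\vf'(|u|)^2\,|u|^2}{4\,\vf(|u|)}\,\bigl|\nabla|u|\bigr|^2 ,
\end{equation*}
so the integrable cross term accounts for only part of $|\nabla w|^2$, and the entire content of the lemma is the integrability of the second, nonnegative summand. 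Your claim that one can ``pair'' $\int\vf(|u|)|\nabla u|^2$ and $\int\vf(|u|)^{-1}|\nabla v|^2$ against the cross term goes in the wrong direction: by the arithmetic--geometric inequality, pointwise
\begin{equation*}
\tfrac12\,\bigl|\Re\la\nabla u,\nabla v\ra\bigr|\leq\tfrac12\,|\nabla u|\,|\nabla v|\leq\tfrac14\,\vf(|u|)\,|\nabla u|^2+\tfrac{1}{4\,\vf(|u|)}\,|\nabla v|^2 ,
\end{equation*}
i.e.\ the cross term is dominated by the two terms you want to control, so its finiteness yields no information about them; equivalently, bounding their sum by $\int|\nabla w|^2$ plus the cross term presupposes exactly the conclusion.

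The missing idea is the one the paper extracts from condition \ref{item5}. When $\vf'(s)\geq 0$ for $s\geq s_1$, the expansion \eqref{eq:H11} of $|\nabla(\vf(|u|)u)|^2$ consists of three \emph{nonnegative} terms whose sum is integrable, whence each --- in particular $\vf'(|u|)^2|u|^2|\nabla|u||^2$ and $\vf(|u|)^2|\nabla u|^2$ --- is separately integrable on $\{|u|>s_1\}$; the dangerous term is then handled using $\vf\geq M_0$ there. When $\vf'(s)\leq 0$ for large $s$ this sign argument fails, and the paper passes to the conjugate function $\psi$ via $w=\vf(|u|)u$ and the identity $\sqrt{\psi(|w|)}\,w=\sqrt{\vf(|u|)}\,u$ (see \eqref{eq:phipsi}), for which $\psi'$ has the favourable sign by \eqref{eq:psi'}. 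Your outer-layer remark that ``eventual monotonicity does the same'' silently presupposes these bounds and never derives them. A second, smaller gap: your inner-layer estimate for the cutoff correction $\eta_k'(|u|)\sqrt{\vf(|u|)}\,u\,\nabla|u|$ works only for $r\geq 0$; when $-1<r<0$ condition \ref{item4} gives $|\eta_k'(|u|)|\sqrt{\vf(|u|)}\,|u|\simeq k^{-r/2}\to+\infty$ on $\{1/(2k)\leq|u|\leq 1/k\}$, and again the $\vf$--$\psi$ duality of Lemma \ref{lemma:new} is needed, as in the paper's final reduction of the case $-1<r<0$ to $r\geq 0$.
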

\begin{proof}
Let us suppose 
 $u, \vf(|u|)\, u \in H^{1}(\Om)$.
 The function $\sqrt{\vf(|u|)}\, u$ belongs to 
$L^{2}(\Om)$, because 
we can write  $\vf(|u|)\, |u|^{2}$ as the product of the
two $L^{2}$ functions $\vf(|u|)\, |u|$ and $|u|$. 

Consider now its gradient. 
Suppose  $r\geq 0$  and $\vf'(s)\geq 0$, for $s\geq s_{1}$ (see \eqref{eq:newcond1}). 
We have
	\begin{gather*}
\nabla(\sqrt{\vf(|u|)}\, u)=
 \sqrt{\vf(|u|)}\, \nabla u +
\left(2\sqrt{\vf(|u|)}\right)^{-1} \vf'(|u|)\, \nabla(|u|)\, u 
\end{gather*}
on the set $\Om_{0}=\{x\in\Om\ |\ u(x)\neq 0\}$.

Let us prove that this gradient belongs to $L^2(\Om_{0})$. We can write
	\begin{gather*}
\int_{\Om_{0}}|\nabla(\sqrt{\vf(|u|)}\, u)|^{2}dx =\\
\left(\int_{0<|u|<s_{0}}+\int_{s_{0}\leq|u|\leq s_{1}} +
\int_{|u|>s_{1}}\right)|\nabla(\sqrt{\vf(|u|)}\, u)|^{2}dx\, . 
\end{gather*}

Observing that  $\vf(|u|)\simeq |u|^{r}$ and  $|\vf'(|u|)|\, |u| =| (\vf(|u|)\, |u|)' - \vf(|u|)| \lesssim |u|^{r} \lesssim 1$ for $|u|<s_{0}$
(see \eqref{eq:condsfi} and \eqref{eq:condfi}), we find
\begin{equation}\label{eq:int1}
\begin{gathered}
\left(\int_{0<|u|<s_{0}}|\nabla(\sqrt{\vf(|u|)}\, u)|^{2}dx\right)^{1/2}\leq \\
\left(\int_{0<|u|<s_{0}}\vf(|u|)\, |\nabla u|^{2}\,dx \right)^{1/2}
+
\left(\int_{0<|u|<s_{0}} \frac{\vf'(|u|)^{2}}{\vf(|u|)}\, |u|^{2}|\nabla 
	|u||^{2}dx\right)^{1/2} \lesssim \\
	\left(\int_{0<|u|<s_{0}} |\nabla u|^{2}\,dx \right)^{1/2}
+
\left(\int_{0<|u|<s_{0}} |\nabla 
	|u||^{2}dx\right)^{1/2}
	\lesssim  \left(\int_{\Om} |\nabla u|^{2} dx\right)^{1/2}.
\end{gathered}
\end{equation}

Concerning the set where  $s_{0}\leq|u|\leq s_{1}$ we have
\begin{equation}\label{eq:int2}
\begin{gathered}
\left(\int_{s_{0}\leq|u|\leq s_{1}}|\nabla(\sqrt{\vf(|u|)}\, u)|^{2}dx\right)^{1/2}\leq \\
\left(\int_{s_{0}\leq|u|\leq s_{1}}\vf(|u|)\, |\nabla u|^{2}\,dx \right)^{1/2}
+
\left(\int_{s_{0}\leq|u|\leq s_{1}} \frac{\vf'(|u|)^{2}}{\vf(|u|)}\, |u|^{2}|\nabla 
	|u||^{2}dx\right)^{1/2} \leq \\
	\left(\max_{s\in[s_{0},s_{1}]}\vf(s)
\int_{s_{0}\leq |u| \leq s_{1}} \!\!\!\!\!\!  |\nabla u|^{2}\,dx  \right)^{1/2}
+
\left(\max_{s\in[s_{0},s_{1}]}\frac{(\vf'(s)\, s)^{2}}{\vf(s)}
	\int_{s_{0}\leq |u| \leq s_{1}} \!\!\!\!\!\!   |\nabla |u||^{2}dx\right)^{1/2}\\
	\lesssim  \left(\int_{\Om} |\nabla u|^{2} dx\right)^{1/2}.
\end{gathered}
\end{equation}

Observe now that
\begin{equation}\label{eq:H11}
\begin{gathered}
|\nabla(\vf(|u|)\, u)|^{2} =
|\vf'(|u|)\, \nabla(|u|)\, u + \vf(|u|)\, \nabla u|^{2}=\\
\vf'(|u|)^{2}|u|^{2}|\nabla(|u|)|^{2} + 2 \,\vf'(|u|)\, \vf(|u|)\, 
\la \nabla(|u|), \Re(\overline{u}\, \nabla u)\ra + \vf^{2}(|u|)\, 
|\nabla u|^{2}=\\
[\vf'(|u|)^{2}|u|^{2}+ 2 \,\vf'(|u|)\, \vf(|u|)\,|u|]\,  
|\nabla(|u|)|^{2} + \vf^{2}(|u|)\, 
|\nabla u|^{2}
\end{gathered}
\end{equation}
on $\Om_{0}$.
Since $\vf'(s)\geq 0$ for $s\geq s_{1}$,
 each term in the last line of \eqref{eq:H11} is non 
negative in $\Om_{0}$. This implies that  
each of these terms is integrable on the set $|u|\geq s_{1}$,  the gradient of 
$\vf(|u|)\, u$ belonging to $L^{2}(\Om)$.

By Cauchy inequality we get
\begin{gather*}
	\int_{|u|> s_{1}}\vf(|u|)\, |\nabla u|^{2}\,dx
	\leq \\
\left(\int_{|u| > s_{1}}\vf^{2}(|u|)\, |\nabla 
u|^{2}\,dx\right)^{1/2}
\left(\int_{|u| > s_{1}} |\nabla 
u|^{2}\,dx\right)^{1/2}
 < +\infty\, ,
\end{gather*}
and 
\begin{gather*}
	\int_{|u|> s_{1}}\frac{\vf'(|u|)^{2}}{\vf(|u|)}\, |u|^{2}|\nabla 
	|u||^{2}dx\leq 
	\frac{1}{M_{0}}\int_{|u|> s_{1}}
	\vf'(|u|)^{2}\,  |u|^{2}|\nabla 
	|u||^{2}dx <+\infty\, ,
\end{gather*}
where  $M_{0}>0$ is  
 such that
 $ \vf(s)\geq M_{0}$ for any  $s\geq s_{1}$.

We have then shown that
\begin{equation}\label{eq:int3}
\int_{|u|> s_{1}}|\nabla(\sqrt{\vf(|u|)}\, u)|^{2}dx < +\infty
\end{equation}

Collecting \eqref{eq:int1}, \eqref{eq:int2} and \eqref{eq:int3} we get
$$
\int_{\Om_{0}}|\nabla(\sqrt{\vf(|u|)}\, u)|^{2}dx < +\infty\, .
$$

Suppose now that  $r\geq 0$  and $\vf'(s)\leq 0$, for $s\geq s_{1}$.
Inequalities \eqref{eq:int1} and \eqref{eq:int2} are still valid. 
In order to estimate $\nabla(\sqrt{\vf(|u|)}\, u)$ on the set where $|u|>s_{1}$
we proceed as follows.

Let us define $w=\vf(|u|)\, u$. We have $u=\psi(|w|)\, w$ and then $w$ and $\psi(|w|)\, w$ belong to $H^{1}(\Om)$. 
On the other hand, since $|w|=\vf(|u|)\, |u|$ and in view of equality \eqref{eq:sqrt}, we can write
$\sqrt{\psi(|w|)}\, |w| = \sqrt{\vf(|u|)}\,|u|$.  Recalling the definition of $w$, this implies
\begin{equation}\label{eq:phipsi}
\sqrt{\psi(|w|)}\, w = \sqrt{\vf(|u|)}\, u\, .
\end{equation}

Since $\psi'(t)\geq 0$ for $t \geq s_{1}\vf(s_1)$ (see \eqref{eq:psi'}), we can rewrite formula
\eqref{eq:H11} replacing $\vf(|u|)\, u$ by $\psi(|w|)\, w$ and deduce - as for \eqref{eq:int3} -
that
$$
\int_{|u|> s_{1}}|\nabla(\sqrt{\vf(|u|)}\, u)|^{2}dx =
\int_{|w|> s_{1}\vf(s_1)}|\nabla(\sqrt{\psi(|w|)}\, w)|^{2}dx < +\infty
$$

We have then proved that, if $r\geq 0$,  the vector
\begin{equation}\label{eq:weakgrad}
\left[ \left(2\sqrt{\vf(|u|)}\right)^{-1} \vf'(|u|)\, \nabla(|u|)\, u 
+ \sqrt{\vf(|u|)}\, \nabla u\right]\, \chi_{\Om_{0}}
\end{equation}
($\chi_{\Om_{0}}$ is the characteristic function of $\Om_{0}$) belongs to $L^{2}(\Om)$. 
Let us show that \eqref{eq:weakgrad}
is the weak gradient of $\sqrt{\vf(|u|)}\,  u$. Let 
$\ep>0$ and define
$$
\vf_{\ep}(t)=
\begin{cases}
  \vf(\ep) & \text{if }   |t|\leq \ep \\ 
 \vf(|t|)  & \text{if }  |t|>\ep. 
\end{cases}\, \qquad
h_{\ep}=\sqrt{\vf_{\ep}(|u|)}\, u\, .
$$

The function $h_{\ep}$ belongs to $H^{1}(\Om)$ and
$$
\nabla h_{\ep}=
\begin{cases}
  \sqrt{\vf(\ep)} \, \nabla u& \text{if }  |u|\leq \ep \\ 
 \nabla(\sqrt{\vf(|u|)} \, u) & \text{if }  |u|>\ep 
\end{cases}
$$
almost everywhere in $\Om$.

For any
$f\in 
\Cspt^{\infty}(\Om)$ and for any $j=1,\ldots, N$ we have
\begin{gather*}
\int_{\Om}h_{\ep}\, \partial_{j}f\, dx = 
- \int_{\Om}f\, \partial_{j}h_{\ep}dx =\\
-\sqrt{\vf(\ep)}\int_{|u|\leq \ep}f\, \partial_{j} u \, dx - 
\int_{|u|>\ep}f\, \partial_{j}[\sqrt{\vf(|u|)}\, u]\, dx\, . 
\end{gather*}

Observing that
\begin{equation}\label{eq:maxvf}
|h_{\ep}| =\sqrt{\vf_{\ep}(|u|)}\, |u| \leq \max \left\{\sqrt{\vf(\ep)}, 
\sqrt{\vf(|u|)}\right\}\, |u| \in 
L^{2}(\Om),
\end{equation}
it follows
\begin{gather*}
\int_{\Om}\sqrt{\vf(|u|)}\, u\, \partial_{j}f\, dx =
\lim_{\ep\to 0^{+}}
\int_{\Om}h_{\ep}\, \partial_{j}f\, dx = 
-\int_{\Om_{0}}f\, \partial_{j}[\sqrt{\vf(|u|)}\, u]\, dx\, .
\end{gather*}
This means that \eqref{eq:weakgrad} is the weak gradient of 
$\sqrt{\vf(|u|)}\, u$ and  then $\sqrt{\vf(|u|)}\, u$ belongs to $H^{1}(\Om)$. 

If $u$ and $\vf(|u|)\, u$ are in $\Hspt^{1}(\Om)$ we shall first prove that
\begin{equation}\label{eq:intxparti}
\int_{\Om}\sqrt{\vf(|u|)}\, u\, \partial_{i}f\, dx = 
-\int_{\Om}f\, \partial_{i}(\sqrt{\vf(|u|)}\, u)\, \chi_{\Om_{0}} dx\, .
\end{equation}
for any $f\in\Cspt^{\infty}(\R^{N})$. 

We can write
\begin{gather*}
\int_{\Om}\sqrt{\vf_{\ep}(|u|)}\, u\, \partial_{i}f\, dx =
\int_{\Om}\vf_{\ep}(|u|)\, u\, 
[\vf_{\ep}(|u|)]^{-\frac{1}{2}}\partial_{i}f\, dx=\\
\int_{\Om}\vf_{\ep}(|u|)\, u\, \left[
\partial_{i}([\vf_{\ep}(|u|)]^{-\frac{1}{2}} f)- f\, 
\partial_{i}([\vf_{\ep}(|u|)]^{-\frac{1}{2}})\right]\, dx\, .
\end{gather*}

Moreover, since $\vf(|u|)\, u$ is in $\Hspt^{1}(\Om)$,
\begin{gather*}
\int_{\Om}\vf_{\ep}(|u|)\, u\, 
\partial_{i}([\vf_{\ep}(|u|)]^{-\frac{1}{2}}f)\, dx =\\
\int_{\Om}[\vf_{\ep}(|u|)- \vf(|u|)]\, u\, 
\partial_{i}([\vf_{\ep}(|u|)]^{-\frac{1}{2}} f)\, dx  +
\int_{\Om}\vf(|u|)\, u\, 
\partial_{i}([\vf_{\ep}(|u|)]^{-\frac{1}{2}}f)\, dx = \\
\frac{1}{\sqrt{\vf(\ep)}}
\int_{|u|<\ep}[\vf(\ep)- \vf(|u|)]\, u\, 
\partial_{i}f\, dx  -
\int_{\Om}[\vf_{\ep}(|u|)]^{-\frac{1}{2}}f \, \partial_{i}(\vf(|u|)\, u)\, dx =\\
 -
\int_{\Om}[\vf_{\ep}(|u|)]^{-\frac{1}{2}}f \, \partial_{i}(\vf(|u|)\, u)\, dx
+ o(1)=\\
- \int_{|u|>\ep} [\vf(|u|)]^{-\frac{1}{2}}\, f \, \partial_{i}(\vf(|u|)\, u)\, dx
+ o(1).
\end{gather*}

This leads to
\begin{gather*}
\int_{\Om}\sqrt{\vf_{\ep}(|u|)}\, u\, \partial_{i}f\, dx =\\
-\int_{|u|>\ep} f\, \left[[\vf(|u|)]^{-\frac{1}{2}}\, \partial_{i}(\vf(|u|)\, u)
+ \vf(|u|)\, u\, 
\partial_{i} ([\vf(|u|)]^{-\frac{1}{2}}) \right] dx + o(1)=\\
-\int_{|u|>\ep} f\,  \partial_{i}(\sqrt{\vf(|u|)}\, u)\, dx + o(1)\, .
\end{gather*}

In view of \eqref{eq:maxvf},
letting $\ep \to 0^{+}$, we get \eqref{eq:intxparti}.
This means that the function $\sqrt{\vf(|u|)}\, u$ extended by zero outside $\Om$ belongs to $H^{1}(\R^{N})$. 
Now we may appeal to a result proved by Deny \cite{deny} (see also Hedberg \cite{hedberg}) and conclude that $\sqrt{\vf(|u|)}\, u\in \Hspt^{1}(\Om)$.
The proof is complete  for $r\geq 0$.

If $-1<r<0$   we write   the function $\sqrt{\vf(|u|)}\, u$ as in \eqref{eq:phipsi}. What we have already proved
for $r\geq 0$ shows that
$\sqrt{\psi(|w|)}\, w \in H^{1}(\Om)$ ($\Hspt^{1}(\Om)$). 
\end{proof}


\section{A lemma on a bilinear form}\label{sec:lemma}

This Section is devoted to prove a Lemma which we shall use in the main Theorem
and concerns the positivity of the  bilinear form $ \la  \BB \nabla v, \nabla v \ra$
in $\Cspt^{\infty}(\Om)\times \Cspt^{\infty}(\Om)$,
where $\BB =\{b_{hj}\}$ is a real matrix whose elements depend on $x$ and $| v |$. We note that  if 
$b_{hj}$ do not depend on $|v  |$,  the result is well known
and can be obtained by standard arguments (see, e.g., \cite[pp.107--108]{fichera}).

\begin{lemma}\label{lemma:BB}
    Let $\BB =\{b_{hj}\}$ a real matrix whose elements belong to 
$L^{\infty}(\Om\times\R^{+})$ and $b_{hj}(x,t)$ are continuous with respect to $t\in \R^{+}$.
 If
 \begin{equation}\label{1}
\int_{\Om}\la  \BB(x,|v |) \nabla v, \nabla v\ra dx \geq 0
\end{equation}
for any real valued scalar function $v\in\Cspt^{\infty}(\Om)$, then
\begin{equation}\label{2}
\la  \BB(x,t)\, \xi, \xi\ra \geq 0
\end{equation}
for almost every $x \in \Om$ and for any $t>0$, $\xi\in \R^{N}$.
\end{lemma}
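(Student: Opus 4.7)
The plan is to adapt the classical localization-plus-oscillation argument for quadratic forms, but with extra care because $\BB$ depends on $|v|$: the test function $v$ must be engineered to pin $|v|$ down near a prescribed value $t$ while still producing a prescribed direction $\xi$ in the gradient. To begin I would fix $t>0$, $\xi\in\R^N$, and pick a Lebesgue point $x_0\in\Om$ of the scalar function $x\mapsto\la \BB(x,t)\xi,\xi\ra$. Choose $\rho>0$ with $B_{2\rho}(x_0)\subset\Om$, a cutoff $\chi\in\Cspt^\infty(\Om)$ with $\chi\equiv t$ on $B_{2\rho}(x_0)$, a nonnegative $\omega\in\Cspt^\infty(B_\rho(x_0))$ not identically zero, and a free scaling parameter $M>0$. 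The real test function
$$
v_n(x)=\chi(x)+\frac{M}{n}\,\omega(x)\,\sin(n\,\xi\cdot x)\in\Cspt^\infty(\Om)
$$
then satisfies $|v_n-t|\leq M\|\omega\|_\infty/n$ uniformly on $\spt\omega$, while
$$
\nabla v_n=M\,\omega\,\cos(n\,\xi\cdot x)\,\xi+\frac{M}{n}\,\nabla\omega\,\sin(n\,\xi\cdot x).
$$

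Next I would split the integral on the left of \eqref{1} into three pieces: over $\Om\setminus B_{2\rho}(x_0)$, where $v_n=\chi$ and the contribution equals a constant $C(\chi)\in\R$ independent of $n$ and $M$; over $B_{2\rho}(x_0)\setminus\spt\omega$, where $v_n\equiv t$ so $\nabla v_n=0$ and the contribution vanishes; and over $\spt\omega$. To pass to the limit $n\to\infty$ on this last piece I would combine three ingredients: the uniform convergence $|v_n|\to t$ on $\spt\omega$, the continuity of $b_{hj}(x,\cdot)$ together with the $L^\infty$ bound (which gives $\la\BB(x,|v_n|)\xi,\xi\ra\to\la\BB(x,t)\xi,\xi\ra$ in $L^1(\spt\omega)$), and the weak-$\ast$ convergence $\cos^2(n\,\xi\cdot x)\rightharpoonup 1/2$; the cross terms of size $M/n$ are $o(1)$ in $L^2$. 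Hypothesis \eqref{1} thus yields
$$
0\leq \frac{M^2}{2}\int_{B_\rho(x_0)}\omega^2(x)\,\la\BB(x,t)\xi,\xi\ra\,dx+C(\chi).
$$

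Dividing by $M^2$ and letting $M\to\infty$ eliminates $C(\chi)$ and produces $\int_{B_\rho(x_0)}\omega^2\,\la\BB(x,t)\xi,\xi\ra\,dx\geq 0$. Choosing a sequence $\omega_k\in\Cspt^\infty(B_{\rho_k}(x_0))$ with $\rho_k\to 0$ whose $L^2$-normalised squares approximate a delta at $x_0$, and invoking the Lebesgue-point property of $x_0$, gives $\la\BB(x_0,t)\xi,\xi\ra\geq 0$. To upgrade this to a single null set valid for all $(t,\xi)$, I would intersect the full-measure sets obtained for $(t,\xi)$ ranging over countable dense subsets of $\R^+\times\R^N$ and then extend by continuity: in $\xi$ via the quadratic structure, and in $t$ via the assumed continuity of $b_{hj}(x,\cdot)$. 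The principal obstacle is the boundary contribution $C(\chi)$, which can have either sign and could in principle swamp the localized term; the free parameter $M$, which inflates the oscillatory piece quadratically while leaving $C(\chi)$ untouched, is the device that defeats it.
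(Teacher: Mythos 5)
Your proof is correct, and it reaches the conclusion by a genuinely different construction from the paper's. Both arguments rest on the same mechanism --- a small-amplitude, high-frequency perturbation around the level $t$ pins $|v|$ near $t$ while making $\nabla v$ large and essentially parallel to a chosen direction --- but the implementations diverge. The paper argues by contradiction with radial test functions $v=\be(\ro)\ga(\om)$ in polar coordinates: a sequence $\be_m$ confined to $(t_0-\de,t_0+\de)$ whose weighted Dirichlet integral $\lam_m=\int\dot\be_m^2\ro^{N-1}d\ro$ blows up supplies the large radial gradient, an angular bump $\ga_m$ concentrating near the ``bad'' direction $\om_0$ selects $\xi=\om_0$, and dividing by $\lam_m$ isolates the negative contribution $-M|\Si_\de|$; the dependence on $x$ is then removed in a second step by rescaling $v(x)=w((x-x_0)/\ep)$ and letting $\ep\to0^+$. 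You instead argue directly with the plane-wave perturbation $\tfrac{M}{n}\,\om(x)\sin(n\,\xi\cdot x)$ of a plateau at height $t$, use the weak-$*$ limit $\cos^2(n\,\xi\cdot x)\rightharpoonup 1/2$ together with the $L^1$-convergence of $\la\BB(\cdot,|v_n|)\xi,\xi\ra$ to evaluate the $n\to\infty$ limit, let the amplitude $M\to\infty$ to annihilate the cutoff's fixed contribution $C(\chi)$, and absorb the localization in $x$ into a Lebesgue-point argument, so that no separate blow-up step is needed. Your route is the classical Legendre--Hadamard-type necessity argument adapted to the $|v|$-dependence; it is somewhat more economical (one localization instead of two, no spherical coordinates, the pointwise inequality obtained for arbitrary $\xi$ at once rather than for unit vectors), while the paper's version avoids the weak-$*$ limit and the Lebesgue differentiation theorem. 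Two small points to make explicit in a write-up: on the set where the test function vanishes (outside $\spt\chi$, and wherever $\chi=0$) the integrand of \eqref{1} is understood to be zero, which is harmless since $\nabla\chi=0$ wherever the nonnegative smooth $\chi$ vanishes; and the final exhaustion over a countable dense set of $(t,\xi)$ uses precisely the assumed continuity of $b_{hj}(x,\cdot)$ together with the continuity of the quadratic form in $\xi$, exactly as you indicate.
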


Let us assume that the matrix $\BB$ does not depend on $x$
and suppose that \eqref{2} is false.
This means that there exists $t_{0}>0$ and $\om_{0}$, $|\om_{0}|=1$ such 
that
$
\la \BB(t_{0})\, \om_{0},\om_{0}\ra <0 .
$

We can find two positive constants $M,\de$ such that 
\begin{equation}\label{ipotass}
\la \BB(t)\, \om,\om \ra \leq -M, \quad \forall \ t, \om: 
|t-t_{0}|<\de, |\om - \om_{0}|<\de, 
|\om|=1.
\end{equation}

 Without loss of generality we assume that the origin belongs to $\Om$.
Let now $v(x)=\be(\ro)\, \ga(\om)$ (where, as usual, $x=\ro\, \om$ ($\ro>0,\, |\om|=1$)), where $\be\in\Cspt^{\infty}(\R^{+})$ and 
$\ga\in C^{\infty}(\Si)$, $\Si$ being the unit sphere 
in $\R^{N}$. Note that
$$
\nabla v(x) = \dot{\be}(\ro)\, \ga(\om)\, \om + \be(\ro)\, \ro^{-1}
\nabla_{\om}\ga(\om)
$$
where the dot and $\nabla_{\om}$ denote the derivative with respect  
to $\ro$ and the tangential gradient 
\footnote{
The tangential gradient $\nabla_{\om}$ of a function $u$ can be defined as
$$
\nabla_{\om}  u = \ro \left( \nabla u  - \frac{\partial u}{\partial \ro}\, \om\right).
$$

By introducing local coordinates on the sphere of radius $\ro$, one can verify that
$\nabla_{\om}$ is a tangential operator acting on $\om$ and that it does not depend on $\ro$.}
on the unit 
sphere respectively.

Assuming that the support of $\be$ is so small that $\spt v \subset\Om$, we have
\begin{gather*}
	0 \leq \int_{\Om}\la  \BB(|v|) \nabla v, \nabla v \ra\, dx = 
	\int_{\Om}\la  \BB(|\be(\ro)\, \ga(\om)|)\, \om, \om \ra\, 
	\dot{\be}^{2}(\ro)\, \ga^{2}(\om)\,dx	+\\
	\int_{\Om}\la  [\BB(|\be(\ro)\, \ga(\om)|) + 
	\BB^{*}(|\be(\ro)\, \ga(\om)|)]\, \om, \nabla_{\om}\ga(\om) \ra\, 
	\dot{\be}(\ro)\, \be(\ro)\, \ro^{-1} \ga(\om)\,dx + \\
	\int_{\Om}\la  \BB(|\be(\ro)\, \ga(\om)|)\, 
	\nabla_{\om}\ga(\om), \nabla_{\om}\ga(\om) \ra\, 
	 \be^{2}(\ro)\,\ro^{-2} dx\, ,
\end{gather*}
i.e.
\begin{equation}
\begin{gathered}
0 \leq \int_{0}^{+\infty}
\dot{\be}^{2}(\ro)\,\ro^{N-1}d\ro\int_{|\om|=1} \la \BB(|\be(\ro)\, \ga(\om)|)\, \om, \om \ra\, 
\ga^{2}(\om)\, d\si_{\om} + \\
\int_{0}^{+\infty} 
\dot{\be}(\ro)\, \be(\ro)\, \ro^{N-2}d\ro\int_{|\om|=1}
\la  [\BB(|\be(\ro)\, \ga(\om)|) + \\
	\BB^{*}(|\be(\ro)\,\ga(\om)|)]\, \om, \nabla_{\om}\ga(\om) \ra
\ga(\om)\, d\si_{\om} + \\
\int_{0}^{+\infty}
\be^{2}(\ro)\,\ro^{N-3}d\ro\int_{|\om|=1} \la \BB(|\be(\ro)\, \ga(\om)|)\, \nabla_{\om}\ga(\om), 
\nabla_{\om}\ga(\om) \ra\, d\si_{\om}\, .
\label{coordpol}
\end{gathered}
\end{equation}

We choose now a particular  sequence of test functions. Fix $0<\ro_1 < \ro_2 < \ro_3 < \ro_4
< \text{dist}\,(0,\partial\Om)$.
The sequences $\{\be_{m}(\ro)\}$ and  $\{\ga_{m}(\om)\}$ are  required to satisfy the following conditions:
\begin{equation}
\begin{cases}
\be_{m}\in\Cspt^{\infty}(\R^{+}),\quad \text{spt}\, \be_{m}\subset 
(\ro_1,\ro_4)\, ;\\
\be_{m}(\ro)=\be_{1}(\ro), \quad \ro\in (\ro_1,\ro_4)\setminus (\ro_2,\ro_3),\ 
m=1,2,\ldots\, ;\\
t_{0}-\de < \be_{m}(\ro) < t_{0}+ \de, \quad \ro\in (\ro_2,\ro_3),\  m=1,2,\ldots\, ;\\
\displaystyle\lim_{n\to\infty}\int_{\ro_2}^{\ro_3}
\dot{\be}_{m}^{2}(\ro)\, \ro^{N-1}d\ro = +\infty\, ;
\label{condpsi}
\end{cases}
\end{equation}

\begin{equation}
\begin{cases}
	\ga_{m}\in C^{\infty}(\Si),\, 
	\text{spt}\,\ga_{m} \subset \Si_{2\de}\, ;\\
\quad 0\leq \ga_{m}\leq 1;\quad \ga_{m}(\om)=1, 
\forall\,\om\in \Si_{\de}\, ;\\
\displaystyle 
\int_{\Si_{2\de}\setminus\Si_{\de}}\ga_{m}^{2}(\om)\, d\si_{\om} =
{\cal O}\left(1/\sqrt{\lambda_{m}}\right)
\, ;\\
\displaystyle
\int_{\Si_{2\de}\setminus\Si_{\de}}|\nabla_{\om}\ga_{m}(\om)|^{2}\, d\si_{\om} =
{\cal O}\left(\sqrt{\lambda_{m}}\right)
\, .
\label{condga}
\end{cases}
\end{equation}

Here $\Si_{\de}$ is the set $\{\om\in \Si\ |\ |\om-\om_{0}| < \de\}$
and
$$
\lambda_{m}=\int_{\ro_2}^{\ro_3}
\dot{\be}_{m}^{2}(\ro)\, \ro^{N-1}d\ro\, .
$$

As a consequence we have also
\begin{equation}\label{ganabga}
\begin{gathered}
\int_{\Si_{2\de}\setminus\Si_{\de}}
|\ga_{m}(\om)|\, |\nabla_{\om}\ga_{m}(\om)|\, d\si_{\om} \leq \\
\left( \int_{\Si_{2\de}\setminus\Si_{\de}}  
\ga_{m}^2(\om)    d\si_{\om}
\right)^\frac{1}{2}
\left(\int_{\Si_{2\de}\setminus\Si_{\de}}   
 |\nabla_{\om}\ga_{m}(\om)|^2 d\si_{\om}
   \right)^\frac{1}{2} =
{\cal O}\left(1\right).
\end{gathered}
\end{equation}

Inequality \eqref{coordpol} and conditions \eqref{condpsi},
\eqref{condga} imply
\begin{gather*}
0 \leq\int_{\ro_2}^{\ro_3}
\dot{\be}_{m}^{2}(\ro)\, \ro^{N-1}d\ro \int_{\Si_{\de}}\la 
\BB(|\be_{m}(\ro)|)\, \om, \om \ra\, 
 d\si_{\om} + \\
\int_{\ro_2}^{\ro_3}
\dot{\be}_{m}^{2}(\ro)\, \ro^{N-1}d\ro 
\int_{\Si_{2\de}\setminus\Si_{\de}}\la 
\BB(|\be_{m}(\ro)| \ga_{m}(\om))\, \om, \om \ra\, \ga_{m}^{2}(\om)\,
 d\si_{\om} +
 \\
  \int_{(\ro_1,\ro_4)\setminus(\ro_2,\ro_3)}
\dot{\be}_{1}^{2}(\ro)\, \ro^{N-1}d\ro \int_{\Si_{\de}}\la 
\BB(|\be_{1}(\ro)|)\, \om, \om \ra\, 
 d\si_{\om} + \\
\int_{(\ro_1,\ro_4)\setminus(\ro_2,\ro_3)}
\dot{\be}_{1}^{2}(\ro)\, \ro^{N-1}d\ro \int_{\Si_{2\de}\setminus\Si_{\de}}\la 
\BB(|\be_{1}(\ro)| \ga_{m}(\om))\, \om, \om \ra\, \ga_{m}^{2}(\om)\,
 d\si_{\om}+ \\
 \int_{\ro_2}^{\ro_3}
\dot{\be}_{m}(\ro)\, \be_{m}(\ro)\, \ro^{N-2}d\ro 
\int_{\Si_{2\de}\setminus\Si_{\de}}\la 
[\BB(|\be_{m}(\ro)| \ga_{m}(\om) )+ \\
	\BB^{*}(|\be_{m}(\ro)\,\ga_{m}(\om)|)]\, \om, \nabla_{\om}\ga_{m}(\om) \ra\, \ga_{m}(\om)\,
 d\si_{\om} +
 \\
 \int_{(\ro_1,\ro_4)\setminus(\ro_2,\ro_3)}
\dot{\be}_{1}(\ro)\, \be_{1}(\ro)\, \ro^{N-2}d\ro
\int_{\Si_{2\de}\setminus\Si_{\de}}\la 
[\BB(|\be_{1}(\ro)| \ga_{m}(\om)+ \\
	\BB^{*}(|\be_1(\ro)\,\ga_{m}(\om)|)]\, \om, \nabla_{\om}\ga_{m}(\om) \ra\, \ga_{m}(\om)\,
 d\si_{\om} +\\
    \int_{\ro_1}^{\ro_4}
\be^{2}_{m}(\ro)\, \ro^{N-3}d\ro 
\int_{\Si_{2\de}\setminus\Si_{\de}}\la 
\BB(|\be_{m}(\ro)| \ga_{m}(\om))\, \nabla_{\om}\ga_{m}(\om), 
\nabla_{\om}\ga_{m}(\om) \ra \,
 d\si_{\om}\, .
 \end{gather*}

It is worth noting that 
\begin{gather*}
 \int_{\ro_2}^{\ro_3}
\dot{\be}_{m}(\ro)\, \be_{m}(\ro)\, \ro^{N-2}d\ro \leq  \\
\left( \int_{\ro_2}^{\ro_3} \dot{\be}_{m}^{2} (\ro)\, \ro^{N-2}d\ro \right)^\frac{1}{2}
\left(\int_{\ro_2}^{\ro_3}  \be_{m}^{2}(\ro) \, \ro^{N-2}d\ro
  \right)^\frac{1}{2}.
\end{gather*}

Since the sequence $\{\be_{m}\}$ is uniformly bounded,
the matrix $\BB$ is bounded and recalling condition \eqref{ipotass}, we deduce
that there exists a constant $K$ such that
	\begin{gather*}
0\leq -M\, |\Si_{\de}|\, \lambda_{m} +
K\bigg(\lam_{m}\int_{\Si_{2\de}\setminus\Si_{\de}} \ga_{m}^{2}(\om)\,
 d\si_{\om} 
 +1 + \int_{\Si_{2\de}\setminus\Si_{\de}} \ga_{m}^{2}(\om)\,
 d\si_{\om} 
+ \\
(\sqrt{\lam_{m}}+1)\int_{\Si_{2\de}\setminus\Si_{\de}} 
|\ga_{m}(\om)|\, |\nabla_{\om}\ga_{m}(\om)|\, d\si_{\om} +
\int_{\Si_{2\de}\setminus\Si_{\de}} 
|\nabla_{\om}\ga_{m}(\om)|^{2} d\si_{\om}\bigg).
\end{gather*}

Dividing by $\lam_{m}$ we get
	\begin{gather*}
0\leq -M \, |\Si_{\de}| +
K\bigg(\int_{\Si_{2\de}\setminus\Si_{\de}} \ga_{m}^{2}(\om)\,
 d\si_{\om} 
 +\lam_{m}^{-1}\left(1 + \int_{\Si_{2\de}\setminus\Si_{\de}} \ga_{m}^{2}(\om)\,
 d\si_{\om} \right)
+ \\
(\lam_{m}^{-1/2}+\lam_{m}^{-1})\int_{\Si_{2\de}\setminus\Si_{\de}} 
|\ga_{m}(\om)|\, |\nabla_{\om}\ga_{m}(\om)|\, d\si_{\om} +
\lam_{m}^{-1}\int_{\Si_{2\de}\setminus\Si_{\de}} 
|\nabla_{\om}\ga_{m}(\om)|^{2} d\si_{\om}\bigg).
\end{gather*}

Letting $m\to\infty$ and keeping in mind 
\eqref{condga} and \eqref{ganabga} we obtain
$$
0\leq -M\, |\Si_{\de}|
$$
and this is absurd. 
Inequality \eqref{2} is then proved when the matrix $\BB$ does not 
depend on $x$. 

In the general case, suppose \eqref{1} holds and take
$$
v(x) = w((x-x_{0})/\ep)
$$
where $x_{0}\in\Om$ is a fixed point, $w\in \Cspt^{\infty}(\Om)$ and $\ep>0$ is sufficiently small.
In this case \eqref{1} shows that
\begin{gather*}
	0\leq \frac{1}{\ep^{2}}\int_{\Om}\la  \BB(x,|w((x-x_{0})/\ep)|) \nabla w((x-x_{0})/\ep), 
	\nabla w((x-x_{0})/\ep)\ra\, dx = \\
	\ep^{N-2}\int_{\Om}\la  \BB(x_{0}+\ep y,|w(y)|) \nabla 
	w(y), 
	\nabla w(y)\ra\, dy\, .
\end{gather*}
 Therefore
 \begin{gather*}
\int_{\Om}\la  \BB(x_{0},|w(y)|) \nabla 
	w(y), 
	\nabla w(y)\ra\, dy = \\
	\lim_{\ep\to 0^{+}}\int_{\Om}\la  \BB(x_{0}+\ep y,|w(y)|) \nabla 
	w(y), 
	\nabla w(y)\ra\, dy \geq 0
\end{gather*}
for almost any $x_{0}\in\Om$. The arbitrariness of $w\in\Cspt^{\infty}(\Om)$ and what we have obtained for 
matrices not depending on $x$ give the result.

\section{The functional dissipativity}\label{sec:funcdiss}
 Let $\Om$ be a domain in $\R^N$, $\vf$ a function satisfying the conditions 1--4 in Section \ref{sec:phi} and
    $Eu=\nabla(\A \nabla u)$. 

The next Theorem provides a necessary and sufficient condition for the  functional dissipativity of the operator $E$ .

\begin{lemma}\label{lemma:fond}
   The operator $E$  is $L^{\Phi}$-dissipative if and only if
    \begin{equation}
\begin{gathered}\label{eq:condv}
\Re \int_{\Om}\Big[ \la \A\nabla v, \nabla v \ra +
\Lambda(|v|)\, \la (\A-\A^{*})\nabla |v|, |v|^{-1}\overline{v} \nabla v ) 
\ra +\\
-\Lambda^{2}(|v|)\, \la \A\nabla |v|, \nabla |v| \ra \Big] dx \geq 0,
\qquad \forall v \in \Hspt^{1}(\Om),
\end{gathered}
\end{equation}
where $\Lambda$ is given by  \eqref{eq:defHGA}. Here and in the sequel the integrand
is extended by zero on the set where $v$ vanishes.
\end{lemma}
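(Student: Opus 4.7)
The approach is to pass to the new unknown $v = \sqrt{\vf(|u|)}\,u$ and derive an algebraic identity that converts $\Re\la\A\nabla u,\nabla(\vf(|u|)u)\ra$ into the integrand of \eqref{eq:condv}. The auxiliary functions $\ze,\Theta,\Lambda$ of Section \ref{subsec:aux} were introduced precisely so that the substitution can be inverted: from $|u|=\ze(|v|)$ and \eqref{eq:H2psi} one obtains $u=\Theta(|v|)\,v$ and $\vf(|u|)\,u=v/\Theta(|v|)$. On $\{v\neq 0\}=\{u\neq 0\}$ the chain rule then gives
$$\nabla u = \Theta(|v|)\nabla v + \Theta'(|v|)\,v\,\nabla|v|,\qquad \nabla(\vf(|u|)u) = \frac{1}{\Theta(|v|)}\nabla v - \frac{\Theta'(|v|)}{\Theta^{2}(|v|)}\,v\,\nabla|v|.$$
Expanding $\la \A\nabla u,\nabla(\vf(|u|)u)\ra$ by sesquilinearity, using that $\Theta,\Theta'$ are real, the relation $\Theta'(|v|)/\Theta(|v|)=\Lambda(|v|)/|v|$ from \eqref{eq:defHGA}, and the observation that $\la\A^{*}\nabla|v|,\nabla v\ra=\overline{\la\A\nabla v,\nabla|v|\ra}$ (which holds because $\nabla|v|$ is real-valued), a short computation produces the pointwise identity
$$\Re\la\A\nabla u,\nabla(\vf(|u|)u)\ra = \Re\Big[\la\A\nabla v,\nabla v\ra + \Lambda(|v|)\la(\A-\A^{*})\nabla|v|,|v|^{-1}\bar v\,\nabla v\ra - \Lambda^{2}(|v|)\la\A\nabla|v|,\nabla|v|\ra\Big].$$

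The equivalence of the two inequalities then reduces to showing that the substitution $u\leftrightarrow v$ establishes a bijection between $\{u\in\Hspt^{1}(\Om):\vf(|u|)u\in\Hspt^{1}(\Om)\}$ and $\Hspt^{1}(\Om)$. The direction $u\mapsto v=\sqrt{\vf(|u|)}u$ is precisely Lemma \ref{lemma:fipsi}, and injectivity is immediate from the strict monotonicity of $s\mapsto s\sqrt{\vf(s)}$. For the reverse direction, given $v\in\Hspt^{1}(\Om)$ one sets $u=\Theta(|v|)v$ and must show that both $u$ and $\vf(|u|)u=v/\Theta(|v|)$ belong to $\Hspt^{1}(\Om)$; since by Lemma \ref{lemma:new} the conjugate function $\psi$ satisfies the same structural hypotheses as $\vf$, and $\widetilde{\Theta}=1/\Theta$ by \eqref{eq:GAtilde}, the inverse change of variable is of the same form treated in the proof of Lemma \ref{lemma:fipsi}, now applied to $\psi$ in place of $\vf$. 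Granting this bijection, the $(\Leftarrow)$ direction follows by inserting $v=\sqrt{\vf(|u|)}u$ in \eqref{eq:condv} and applying the identity, while the $(\Rightarrow)$ direction follows by inserting $u=\Theta(|v|)v$ in the $L^{\Phi}$-dissipativity inequality. Since Sobolev functions have vanishing gradient a.e.\ on their zero set, both integrands vanish a.e.\ on $\{u=0\}=\{v=0\}$, which is consistent with the extension-by-zero convention in the statement, and the integrability of the right-hand side on $\{v\neq 0\}$ follows from $|\Lambda|<1$ (by \eqref{eq:G<1}), boundedness of $\A$, and $|\nabla|v||\leq|\nabla v|\in L^{2}(\Om)$.

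The main obstacle will be the surjectivity of the map $u\mapsto v$, i.e.\ verifying that $v\in\Hspt^{1}(\Om)$ implies $\Theta(|v|)v,\,v/\Theta(|v|)\in\Hspt^{1}(\Om)$. This requires a careful argument paralleling that of Lemma \ref{lemma:fipsi}, but with $\psi$ in the role of $\vf$, together with the truncation-and-weak-gradient extension via the approximations $\vf_{\ep}$ used in that proof. The chain-rule computation producing the algebraic identity is routine once the relations of Section \ref{subsec:aux} are in hand, and in particular once one recognizes that the very definition $\Lambda(t)=t\Theta'(t)/\Theta(t)$ is what makes the cross terms collapse into the $(\A-\A^{*})$ pairing appearing in \eqref{eq:condv}.
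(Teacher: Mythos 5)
Your algebraic identity and the sufficiency half are essentially the paper's argument: the substitution $v=\sqrt{\vf(|u|)}\,u$, the relations $u=\Theta(|v|)v$ and $\vf(|u|)u=v/\Theta(|v|)$, and the collapse of the cross terms via $\Lambda(t)=t\,\Theta'(t)/\Theta(t)$ reproduce the computation in the paper (which in addition splits off the case $-1<r<0$ and treats it by passing to the conjugate function $\psi$ and the adjoint matrix $\A^{*}$, using Lemma \ref{lemma:new} and \eqref{eq:GAtilde}).

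The necessity half, however, has a genuine gap: you reduce it to the surjectivity of $u\mapsto\sqrt{\vf(|u|)}\,u$ onto $\Hspt^{1}(\Om)$, i.e.\ to the claim that $v\in\Hspt^{1}(\Om)$ implies $\Theta(|v|)\,v\in\Hspt^{1}(\Om)$ and $v/\Theta(|v|)\in\Hspt^{1}(\Om)$. This claim is false in general, not merely hard. By \eqref{eq:dispsi} one has $\Theta(t)\simeq t^{-r/(2+r)}$ near $t=0$, so for $r>0$ the factor $\Theta(|v|)$ blows up on the zero set of $v$. Concretely, for $\vf(s)=s^{p-2}$ one gets $\Theta(t)=t^{2/p-1}$ and $u=|v|^{2/p-1}v$; taking $v$ real-valued and vanishing to first order on a hyperplane (say $v=x_{1}\chi$ with $\chi$ a cutoff), $|\nabla u|\gtrsim|x_{1}|^{2/p-1}$ near $\{x_{1}=0\}$, which fails to be square integrable for $p\geq 4$. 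So not every $v\in\Hspt^{1}(\Om)$ arises from an admissible $u$, and ``inserting $u=\Theta(|v|)v$ into the dissipativity inequality'' is not legitimate. Note also that Lemma \ref{lemma:fipsi} cannot be adapted to supply what you need: its hypothesis is that \emph{both} $u$ and $\vf(|u|)u$ lie in $H^{1}$, and it concludes membership for the intermediate function $\sqrt{\vf(|u|)}\,u$; you would need a one-hypothesis statement producing a more singular function, which is a different (and false) assertion. The paper circumvents this by regularizing: for $v\in\Cspt^{1}(\Om)$ it tests the dissipativity inequality with $u_{\ep}=\Theta(g_{\ep})v$, $g_{\ep}=\sqrt{|v|^{2}+\ep^{2}}$, which is admissible because $\Theta$ is evaluated only at arguments $\geq\ep$; it then lets $\ep\to0^{+}$ using the bounds \eqref{eq:dispsi}, \eqref{eq:disH'} and dominated convergence (the majorant $|v|^{r}|\nabla v|^{2}$ being integrable since $r>-1$), obtaining \eqref{eq:condv} for $v\in\Cspt^{1}(\Om)$, and finally extends to all of $\Hspt^{1}(\Om)$ by density together with the Vitali convergence theorem. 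Some such limiting argument is indispensable; without it your necessity direction does not close.
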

\begin{proof}
	\textit{Sufficiency.}
	Suppose $r\geq 0$. Let $u\in \Hspt^{1}(\Om)$ such that $\vf(|u|)\, u\in \Hspt^{1}(\Om)$ and define $v=\sqrt{\vf(|u|)}\, u$. In view
	of   Lemma \ref{lemma:fipsi} we have that $v$ belongs to 
	$\Hspt^{1}(\Om)$.
	Moreover $u=|v|^{-1}\ze(|v|)\, v=\Theta(|v|)\, v$, $\vf(|u|)\, \overline{u} = |v|\, 
	[\ze(|v|)]^{-1}\overline{v}=[\Theta(|v|)]^{-1}v$ (see \eqref{eq:defHGA}). Therefore
	\begin{gather*}
\la \A \nabla u, \nabla(\vf(|u|)\, u)\ra =
	\la \A \nabla(\Theta(|v|)\, v), \nabla([\Theta(|v|)]^{-1}v)\ra =\\
	\la \A(\Theta'(|v|) v \nabla|v| + \Theta(|v|) \nabla v,
	- \Theta'(|v|) [\Theta(|v|)]^{-2} v\nabla|v|  + [\Theta(|v|)]^{-1} \nabla 
	v \ra  =\\
	 -(\Theta'(|v|)[\Theta(|v|)]^{-1}|v|)^{2}\la \A \nabla |v|, \nabla |v|\ra
	 +\\
	 \Theta'(|v|)[\Theta(|v|)]^{-1}(v \la \A \nabla |v|, \nabla v\ra - 
	 \overline{v}\la \A \nabla v, \nabla |v|\ra) + 
	 \la \A \nabla v, \nabla v\ra =\\
	 -\Lambda^{2}(|v|)\la \A \nabla |v|, \nabla |v|\ra + \\
	 \Lambda(|v|)(\la \A \nabla |v|, |v|^{-1}\overline{v}\,\nabla v\ra -
	 \la |v|^{-1}\overline{v}\, \nabla v, \A^{*}\nabla |v|\ra)
	 + \la \A \nabla v, \nabla v\ra .
\end{gather*}
on the set $\{x\in \Om\ |\ u(x)\neq 0\}=\{x\in \Om\ |\ v(x)\neq 0\}$.
	
Therefore
\begin{gather*}
\Re \int_{\Om}\la \A \nabla u, \nabla(\vf(|u|)\, u)\ra\, dx =\\
\Re \int_{\Om}\Big[ \la \A\nabla v, \nabla v \ra +
\Lambda(|v|)\, \la (\A-\A^{*})\nabla |v|, |v|^{-1}\overline{v} \nabla v ) 
\ra +\\
-\Lambda^{2}(|v|)\, \la \A\nabla |v|, \nabla |v| \ra \Big] dx \geq 0
\end{gather*}
because of \eqref{eq:condv}, and \eqref{eq:defdiss0} is proved.

If $-1<r<0$, setting $w=\vf(|u|)\, u$, i.e. $u=\psi(|w|)\, w$, we  can write  condition \eqref{eq:defdiss0} as
$$
\Re \int_{\Om}\la \A^{*}\nabla w, \nabla(\psi(|w|)\, w)\ra dx\geq 0
$$
for any $w\in \Hspt^{1}(\Om)$ such that $\psi(|w|)\, w\in \Hspt^{1}(\Om)$.

Recalling Lemma \ref{lemma:new}, what we have already proved for $r\geq 0$ shows that this inequality holds 
 if
\begin{equation}
\begin{gathered}\label{eq:condvstar}
\Re \int_{\Om}\Big[ \la \A^{*}\nabla v, \nabla v \ra +
\widetilde{\Lambda}(|v|)\, \la (\A^{*}-\A)\nabla |v|, |v|^{-1}\overline{v} \nabla v ) 
\ra +\\
-\widetilde{\Lambda}^{2}(|v|)\, \la \A^{*}\nabla |v|, \nabla |v| \ra \Big] dx \geq 0,
\qquad \forall v \in \Hspt^{1}(\Om).
\end{gathered}
\end{equation}

Since $\widetilde{\Lambda}(|v|)=-\Lambda(|v|)$ (see \eqref{eq:GAtilde}), conditions \eqref{eq:condvstar}
coincides with \eqref{eq:condv} and the sufficiency is proved also for $-1<r<0$.

\textit{Necessity.}
Let $v\in \Cspt^{1}(\Om)$ and define $u_{\ep}=\Theta(g_{\ep})\, v$, where
$g_{\ep}=\sqrt{|v|^{2}+\ep^{2}}$.

The function $u_{\ep}$ and  $\vf(|u_{\ep}|)\, u_{\ep}$ belong to $\Cspt^{1}(\Om)$ and we have
\begin{equation}
\begin{gathered}\label{eq:formulona}
\la \A \nabla u_{\ep}, \nabla(\vf(|u_{\ep}|)\, u_{\ep}\ra =\\
\vf(|u_{\ep}|)\, \la \A \nabla u_{\ep}, \nabla u_{\ep}\ra +
\vf'(|u_{\ep}|)\la \A \nabla u_{\ep},  u_{\ep}\,\nabla(|u_{\ep}|) \ra
=\\
\vf[\Theta(g_{\ep})\, |v|]\,
\la \A (\Theta'(g_{\ep})\, v\, \nabla g_{\ep} + \Theta(g_{\ep})\nabla v) ,
\Theta'(g_{\ep})\, v\, \nabla  g_{\ep} + \Theta(g_{\ep})\nabla v \ra + \\
\vf'[\Theta(g_{\ep})\, |v|]\times \\
\la \A (\Theta'(g_{\ep})\, v\, \nabla g_{\ep} + \Theta(g_{\ep})\nabla v) ,
\Theta(g_{\ep})\, v\, 
(\Theta'(g_{\ep})\, |v|\, \nabla  g_{\ep} + \Theta(g_{\ep})\nabla |v|) \ra 
=\\
\vf[\Theta(g_{\ep})\, |v|]\Big\{ [\Theta'(g_{\ep})]^{2}|v|^{2}
\la \A \nabla g_{\ep},\nabla g_{\ep}\ra + \\
\Theta'(g_{\ep})\, \Theta(g_{\ep})\, [ \la \A \nabla g_{\ep}, \overline{v}\,  
\nabla v \ra +
\la \A (\overline{v}\,  
\nabla v), \nabla g_{\ep}  \ra] + \Theta^{2}(g_{\ep})\, \la \A \nabla v, 
\nabla v \ra \Big\} + \\
\vf'[\Theta(g_{\ep})\, |v|]\Big\{
\Theta(g_{\ep})[\Theta'(g_{\ep})]^{2}|v|^{3} \la \A \nabla g_{\ep},\nabla 
g_{\ep}\ra + \\
\Theta^{2}(g_{\ep})\, \Theta'(g_{\ep}) [ |v|^{2} \la \A \nabla g_{\ep},\nabla 
|v| \ra + |v| \la \A (\overline{v}\nabla v), \nabla g_{\ep} \ra ]+ \\
\Theta^{3}(g_{\ep}) \la \A (\overline{v}\nabla v), \nabla |v| \ra \Big\}
\, .
\end{gathered}
\end{equation}

Letting $\ep \to 0^{+}$ the right hand side tends to
\begin{equation}\label{eq:rhstt}
\begin{gathered}
\vf[\Theta(|v|)\, |v|]\, \Theta^{2}(|v|)\, \la \A \nabla v, \nabla v\ra
+ \\
\vf[\Theta(|v|)\, |v|]\, \Theta'(|v|)\, \Theta(|v|)\, \la \A \nabla |v|, \overline{v}\nabla v\ra
+ \\
\Theta(|v|)\big\{ \vf[\Theta(|v|)\, |v|]\,\Theta'(|v|) + \\
\vf'[\Theta(|v|)\, |v|]\, 
\Theta(|v|)\,  [\Theta'(|v|)\, |v| +
\Theta(|v|)]\big\} \la \A (\overline{v}\nabla v), \nabla |v| \ra + \\
\Theta'(|v|)\, |v|^{2}\big\{ \vf[\Theta(|v|)\, |v|]\,\Theta'(|v|) +\\
\vf'[\Theta(|v|)\, |v|]\, \Theta(|v|)\, [\Theta'(|v|)\, |v|+ \Theta(|v|)]\big\}
\la \A \nabla |v|, \nabla |v| \ra
\end{gathered}
\end{equation}
on the set $\Om_{0}=\{x\in \Om\ |\ v(x)\neq 0\}$.

In view of \eqref{eq:H2psi} and \eqref{eq:tH'} we have
\begin{gather*}
\vf[\Theta(|v|)\, |v|]\, \Theta^{2}(|v|) = 1, 
\ \vf[\Theta(|v|)\, |v|]\, \Theta'(|v|)\, \Theta(|v|) =  \Theta'(|v|)/\Theta(|v|), \\
\vf[\Theta(|v|)\, |v|]\,\Theta'(|v|) +
\vf'[\Theta(|v|)\, |v|]\, \Theta(|v|)\, [\Theta'(|v|)\, |v|+ \Theta(|v|)] =\\
-\Theta'(|v|)/\Theta^{2}(|v|).
\end{gather*}

Substituting these equalities in \eqref{eq:rhstt} and keeping in mind \eqref{eq:formulona}, we see that
\begin{gather*}
\lim_{\ep\to 0^{+}}\la \A \nabla u_{\ep}, \nabla(\vf(|u_{\ep}|)\, u_{\ep}\ra =\\
\la \A \nabla v, \nabla v\ra 
+ \Lambda(|v|)\, ( \la \A \nabla |v|, |v|^{-1} \overline{v}\nabla v\ra -
\la \A (|v|^{-1} \overline{v}\nabla v), \nabla |v| \ra) +\\
-\Lambda^{2}(|v|) \la \A \nabla |v|, \nabla |v| \ra
\end{gather*}
on $\Om_{0}$.

By using \eqref{eq:condfi},\eqref{eq:dispsi}  and  \eqref{eq:disH'} 
one can prove that each term in the last expression of \eqref{eq:formulona} can be
majorized by $L^{1}$ functions. Let us consider the first one:
$\vf[\Theta(g_{\ep})\, |v|] [\Theta'(g_{\ep})]^{2}|v|^{2}
\la \A \nabla g_{\ep},\nabla g_{\ep}\ra$.  Observing also that 
$|\nabla g_{\ep}|\leq |\nabla v|$, we get
\begin{equation}\label{eq:get}
	\begin{gathered}
|\vf[\Theta(g_{\ep})\, |v|] [\Theta'(g_{\ep})]^{2}|v|^{2}
\la \A \nabla g_{\ep},\nabla g_{\ep}\ra | \lesssim 
[\Theta(g_{\ep})\, |v|]^{r} \Theta^{2}(g_{\ep})g^{-2}_{\ep} |v|^{2} |\nabla 
v|^{2} \leq \\
\Theta^{2+r}(g_{\ep})\, |v|^{r} |\nabla 
v|^{2} \lesssim g_{\ep}^{-r}
\, |v|^{r} |\nabla 
v|^{2}.
\end{gathered}
\end{equation}

Moreover
$$
g_{\ep}^{-r}
\, |v|^{r} \leq
\begin{cases}
 C, & \text{if }  r>0 \\ 
 C\, |v|^{r}, & \text{if }  r\leq 0,
\end{cases}
$$
where the constant $C$ does not depend on $\ep$.
Since the function $|v|^{r}|\nabla v|^{2}\chi_{\Om_{0}}$
belong to $L^{1}(\Om)$ because $r>-1$ (see Langer
\cite[p.312]{langer}),
 we see that in any case the last term in \eqref{eq:get} can be
majorized by an $L^{1}$ function which does not depend on $\ep$. 
The other terms in \eqref{eq:formulona} can be estimated in 
a similar way.

By the Lebesgue dominated convergence theorem, we find
\begin{gather*}
\lim_{\ep\to 0^{+}}\int_{\Om}\la \A \nabla u_{\ep}, 
\nabla(\vf(|u_{\ep}|)\, u_{\ep}\ra dx =\\
\int_{\Om}(\la \A \nabla v, \nabla v\ra 
+ \Lambda(|v|)\, ( \la \A \nabla |v|, |v|^{-1} \overline{v}\nabla v\ra -
\la \A (|v|^{-1} \overline{v}\nabla v), \nabla |v| \ra) +\\
-\Lambda^{2}(|v|) \la \A \nabla |v|, \nabla |v| \ra)\, dx \, .
\end{gather*}

The left hand side 
being non negative (see \eqref{eq:defdiss0}), inequality
\eqref{eq:condv} holds for any $v\in \Cspt^{1}(\Om)$.

Let  now $v\in \Hspt^{1}(\Om)$ and 
$v_{n}\in \Cspt^{\infty}(\Om)$  such that
$v_{n}\to v$ in $H^{1}$ norm. 
Let us show that
\begin{equation}
\begin{gathered}\label{eq:limitn}
\lim_{n\to\infty}
\int_{\Om}(\la \A \nabla v_{n}, \nabla v_{n}\ra 
+ \Lambda(|v_{n}|)\, ( \la \A \nabla |v_{n}|, |v_{n}|^{-1} 
\overline{v_{n}}\nabla v_{n}\ra +\\
-
\la \A (|v_{n}|^{-1} \overline{v_{n}}\nabla v_{n}), \nabla |v_{n}| \ra) 
-\Lambda^{2}(|v_{n}|) \la \A \nabla |v_{n}|, \nabla |v_{n}| \ra)\, dx =\\
\int_{\Om}(\la \A \nabla v, \nabla v\ra 
+ \Lambda(|v|)\, ( \la \A \nabla |v|, |v|^{-1} \overline{v}\nabla v\ra -
\la \A (|v|^{-1} \overline{v}\nabla v), \nabla |v| \ra) +\\
-\Lambda^{2}(|v|) \la \A \nabla |v|, \nabla |v| \ra)\, dx \, .
\end{gathered}
\end{equation}

We may assume that
$v_{n} \to v$, $\nabla v_{n} \to \nabla v$ almost everywhere
in $\Om$. Denote by $\Om_{0n}$ and $\Om_{0}$ the
sets 
$\{x\in\Om\ | \ v_{n}(x)\neq 0\}$ and 
$\{x\in\Om\ |\  v(x)\neq 0\}$, respectively. 
As proved in \cite[p.1087–-1088]{cialmaz}, 
$$
\chi_{\Om_{0n}}|v_{n}|^{-1} \overline{v_{n}}\nabla v_{n}
\to \chi_{\Om_{0}} |v|^{-1} \overline{v}\nabla v
\quad \text{a.e. in } \Om\, .
$$
Because of the continuity of $\Lambda$ on $(0,\infty)$ 
and its boundedness (see 
\eqref{eq:G<1}), we deduce
$$
\chi_{\Om_{0n}}\Lambda(|v_{n}|)\, |v_{n}|^{-1} \overline{v_{n}}\nabla v_{n}
\to \chi_{\Om_{0}} \Lambda(|v|)\, |v|^{-1} \overline{v}\nabla v
\quad \text{a.e. in } \Om\, .
$$

The boundedness of $\Lambda$  also leads to
\begin{gather*}
\int_{G}|\Lambda(|v_{n}|)\, ( \la \A \nabla |v_{n}|, |v_{n}|^{-1} \overline{v_{n}}\nabla v_{n}\ra -
\la \A (|v_{n}|^{-1} \overline{v_{n}}\nabla v_{n}), \nabla |v_{n}| \ra) +\\
-\Lambda^{2}(|v_{n}|) \la \A \nabla |v_{n}|, \nabla |v_{n}| \ra
|\, dx 
\lesssim 
\int_{G}|\nabla v_{n}|^{2}dx
\end{gather*}
for any measurable set $G\subset\Om$. This inequality
easily implies
that the sequence
of functions 
\begin{gather*}
\la \A \nabla v_{n}, \nabla v_{n}\ra 
+ \Lambda(|v_{n}|)\, ( \la \A \nabla |v_{n}|, |v_{n}|^{-1} 
\overline{v_{n}}\nabla v_{n}\ra +\\
-
\la \A (|v_{n}|^{-1} \overline{v_{n}}\nabla v_{n}), \nabla |v_{n}| \ra) 
-\Lambda^{2}(|v_{n}|) \la \A \nabla |v_{n}|, \nabla |v_{n}| \ra
\end{gather*}
satisfies the conditions of the Vitali convergence Theorem 
(see, e.g., \cite[p.71]{cannarsa}).

This establishes \eqref{eq:limitn} and the result follows from \eqref{eq:condv}.
\end{proof}

The next Corollaries provide necessary and, separately,  sufficient
conditions for the functional dissipativity of the operator $E$.

\begin{corollary}\label{co:1}
    If the operator $E$  is $L^{\Phi}$-dissipative, we have
	\begin{equation}\label{eq:E>=0}
\la \Re \A (x) \xi,\xi\ra \geq 0
\end{equation}
for almost every $x\in\Om$ and for any  $\xi\in\R^{N}$.
\end{corollary}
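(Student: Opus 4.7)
The plan is to apply Lemma \ref{lemma:fond} and specialize the resulting inequality \eqref{eq:condv} to real-valued test functions $v$, where most of the integrand collapses and only the $\Re \A$ part survives. I would then invoke Lemma \ref{lemma:BB} to pass from the integral inequality to the pointwise inequality.

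First, let $v \in \Hspt^{1}(\Om)$ be real-valued (which is allowed, since real-valued $\Cspt^\infty$-functions lie in $\Hspt^{1}(\Om)$). On $\Om_0 = \{v \neq 0\}$ one has $|v|^{-1}\overline{v} = \mathrm{sign}(v)$, hence $|v|^{-1}\overline{v}\,\nabla v = \mathrm{sign}(v)\,\nabla v = \nabla |v|$, and also $\la \A \nabla v,\nabla v\ra = \la \A \nabla |v|,\nabla |v|\ra$ since both $\nabla v$ and $\nabla |v|$ are real and differ only by a sign. Therefore the integrand in \eqref{eq:condv} becomes
$$
\la \A\nabla v,\nabla v\ra + \Lambda(|v|)\la (\A-\A^{*})\nabla|v|,\nabla|v|\ra - \Lambda^{2}(|v|)\la \A\nabla|v|,\nabla|v|\ra.
$$
The middle term has zero real part: for any real vector $\eta \in \R^{N}$, $\Re \la (\A-\A^{*})\eta,\eta\ra = \la (\Re\A - (\Re\A)^{t})\eta,\eta\ra = 0$, because the bilinear form of an antisymmetric real matrix vanishes on the diagonal. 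Consequently \eqref{eq:condv} reduces, for real $v$, to
$$
\int_{\Om}\bigl[1-\Lambda^{2}(|v|)\bigr]\,\la \Re\A(x)\nabla v,\nabla v\ra\,dx \geq 0.
$$

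Now set $\BB(x,t) = \bigl[1-\Lambda^{2}(t)\bigr]\,\Re \A(x)$. From \eqref{eq:G<1} we have $0 < 1-\Lambda^{2}(t) < 1$, so $\BB \in L^{\infty}(\Om\times\R^{+})$; moreover $\Lambda$ is continuous on $(0,+\infty)$ because it is expressible through $\vf$ and $\vf'$ via $\Lambda(t) = -\ze(t)\vf'(\ze(t))/\bigl(\ze(t)\vf'(\ze(t)) + 2\vf(\ze(t))\bigr)$ with $\ze \in C^{1}((0,+\infty))$. Hence $\BB$ satisfies the hypotheses of Lemma \ref{lemma:BB}, and that lemma yields $\la \BB(x,t)\xi,\xi\ra \geq 0$ for almost every $x\in\Om$ and all $t>0$, $\xi\in\R^{N}$. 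Dividing by the strictly positive factor $1-\Lambda^{2}(t)$ gives \eqref{eq:E>=0}.

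The only genuine checks are the vanishing of the antisymmetric contribution on real vectors and the verification of the $L^{\infty}$ and continuity hypotheses of Lemma \ref{lemma:BB} for the weight $1-\Lambda^{2}$; both are immediate from \eqref{eq:G=}--\eqref{eq:G<1}. There is no real obstacle here, the work was done in Lemma \ref{lemma:fond} and Lemma \ref{lemma:BB}.
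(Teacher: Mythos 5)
Your proof is correct, but it takes a genuinely different route from the paper's. The paper does not use real test functions: it keeps $v$ complex, sets $v(x)=\ro(x)e^{i\lam\xi\cdot x}$ with $\ro$ real, and observes that in the decomposition $X=\Re(|v|^{-1}\overline v\nabla v)=\nabla|\ro|$, $Y=\Im(|v|^{-1}\overline v\nabla v)=\lam|\ro|\xi$ the inequality \eqref{eq:inXY} becomes a quadratic polynomial in $\lam$; positivity for all $\lam$ forces the leading coefficient $\int_\Om\ro^2\la\Re\A\,\xi,\xi\ra\,dx$ to be nonnegative, and since the weight $\ro^2$ is now decoupled from the matrix, the arbitrariness of $\ro$ gives the pointwise statement by an elementary localization. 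You instead take $v$ real, which correctly collapses the integrand of \eqref{eq:condv} to $[1-\Lambda^2(|v|)]\la\Re\A\nabla v,\nabla v\ra$ (your computation that the antisymmetric term has zero real part on real vectors, and that $|v|^{-1}\overline v\nabla v=\nabla|v|$, is right), but you are then left with the $|v|$-dependent weight $1-\Lambda^2(|v|)$ coupled to the test function, and you need the full strength of Lemma \ref{lemma:BB} to pass to the pointwise inequality; your verification of its hypotheses (boundedness via \eqref{eq:G<1}, continuity of $\Lambda$ via $\ze\in C^1$) is adequate, and the strict positivity $1-\Lambda^2(t)>0$ lets you divide out the weight. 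The trade-off: the paper's oscillation trick avoids invoking the comparatively heavy Lemma \ref{lemma:BB} at this stage (the paper reserves it for the necessity part of Theorem \ref{th:main}), whereas your argument is shorter to state once Lemma \ref{lemma:BB} is accepted, and it delivers the full quantified conclusion (a single null set working for all $\xi$) directly rather than through a density argument in $\xi$. Both are sound; there is no gap in your proposal.
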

\begin{proof}

	Given a function $v\in\Cspt^{1}(\Om)$, define
\begin{equation}\label{eq:defXY}
X= \Re(|v|^{-1}\overline{v} \nabla v), \quad
	Y = \Im(|v|^{-1}\overline{v} \nabla v)
\end{equation}
on the set $\{x\in\Om\ |\ v(x)\neq 0\}$. As 
in \cite[p.1074]{cialmaz}, we have
\begin{gather*}
 \Re \la \A\nabla v,\nabla v\ra=
\la \Re \A X,X\ra +\la \Re \A Y,Y\ra
+\la \Im(\A-\A^{t})X,Y\ra ,\\
\Re\la (\A-\A^{*})\nabla |v|,|v|^{-1}\overline{v}
\nabla v\ra =
\la \Im(\A-\A^{*})X,Y\ra ,
\\
\Re \la \A\nabla|v|,\nabla|v|\ra = \la \Re \A X,X\ra.
\end{gather*}

The operator being $L^{\Phi}$-dissipative, \eqref{eq:condv} 
holds and
we can write
\begin{equation}
\begin{gathered}\label{eq:inXY}
\int_{\Om}\{[1-\Lambda^{2}(|v|)] \la\Re \A X,X\ra 
+ \la\Re \A Y,Y\ra +\\
[1+\Lambda(|v|)] \la \Im \A X,Y\ra 
+
[1-\Lambda(|v|)] \la \Im \A^{*} X,Y\ra \} dx \geq 0\, .
\end{gathered}
\end{equation}

Set $v(x)=\ro(x)\, e^{i\lam\xi\cdot x}$ where 
$\ro\in \Cspt^{\infty}(\Om)$ is a
 real valued function, $\xi\in\R^{N}$ and 
 $\lambda\in\R$. Putting $v$ in \eqref{eq:inXY}
 we get
 \begin{gather*}
\int_{\Om}[1-\Lambda^{2}(|v|)] \la\Re \A \nabla\ro,\nabla\ro\ra dx
+ \lam^{2}\int_{\Om}\ro^{2}\la\Re \A \xi,\xi\ra dx+\\
\lam \int_{\Om}\{[1+\Lambda(|v|)] \la \Im \A \nabla\ro,\xi\ra 
+
[1-\Lambda(|v|)] \la \Im \A^{*} \nabla\ro,\xi\ra \}\ro\,  
dx \geq 0\, .
\end{gather*}

For the arbitrariness of $\lam$ we find
$$
\int_{\Om}\ro^{2}\la\Re \A \xi,\xi\ra dx \geq 0.
$$
 
This inequality holding for any 
real valued $\ro\in\Cspt^{\infty}(\Om)$,
we obtain the result.
 \end{proof}
 
\begin{corollary}\label{co:2}
If
\begin{equation}\label{eq:polsuf}
\begin{gathered}
{}[1-\Lambda^{2}(t)] \la\Re \A(x)\, \xi,\xi\ra 
+ \la\Re \A(x)\, \eta,\eta\ra +\\
[1+\Lambda(t)] \la \Im \A(x)\, \xi,\eta\ra 
+
[1-\Lambda(t)] \la \Im \A^{*}(x)\, \xi,\eta\ra
\geq 0
\end{gathered}
\end{equation}
for almost every $x\in\Om$ and for any $t>0, \xi, \eta \in \R^{N}$, 
the operator $E$  is $L^{\Phi}$-dissipative.
\end{corollary}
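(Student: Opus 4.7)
By Lemma \ref{lemma:fond}, it is enough to show that hypothesis \eqref{eq:polsuf} implies the integral inequality \eqref{eq:condv} for every $v\in \Hspt^1(\Om)$. The plan is to rewrite the integrand of \eqref{eq:condv} pointwise, and then recognize it as the quadratic form appearing in \eqref{eq:polsuf} evaluated at a specific choice of vectors depending on $v$.

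First I take $v\in \Cspt^1(\Om)$ and, on the set where $v(x)\neq 0$, introduce the real vectors
$$
X=\Re(|v|^{-1}\overline{v}\nabla v),\qquad Y=\Im(|v|^{-1}\overline{v}\nabla v),
$$
exactly as in \eqref{eq:defXY}. Using the three algebraic identities
$$
\Re\la \A\nabla v,\nabla v\ra=\la\Re\A\, X,X\ra+\la\Re\A\, Y,Y\ra+\la\Im(\A-\A^{t})X,Y\ra,
$$
$$
\Re\la(\A-\A^{*})\nabla|v|,|v|^{-1}\overline{v}\nabla v\ra=\la\Im(\A-\A^{*})X,Y\ra,
$$
$$
\Re\la\A\nabla|v|,\nabla|v|\ra=\la\Re\A\, X,X\ra,
$$
(already used in the proof of Corollary \ref{co:1}), a direct computation shows that the integrand on the left-hand side of \eqref{eq:condv} equals
$$
[1-\Lambda^{2}(|v|)]\la\Re\A\, X,X\ra+\la\Re\A\, Y,Y\ra
+[1+\Lambda(|v|)]\la\Im\A\, X,Y\ra+[1-\Lambda(|v|)]\la\Im\A^{*}X,Y\ra.
$$
Applying hypothesis \eqref{eq:polsuf} pointwise with $t=|v(x)|>0$, $\xi=X(x)$, $\eta=Y(x)$, this quantity is non-negative almost everywhere on $\{v\neq 0\}$; with our convention that the integrand is extended by zero where $v$ vanishes, \eqref{eq:condv} follows for such $v$.

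Finally I extend the inequality from $\Cspt^1(\Om)$ to $\Hspt^1(\Om)$ by the same density argument used at the end of the proof of Lemma \ref{lemma:fond}: given $v\in \Hspt^1(\Om)$, take $v_n\in \Cspt^\infty(\Om)$ with $v_n\to v$ in $H^1$ norm, pass to a subsequence so that $v_n$ and $\nabla v_n$ converge almost everywhere, observe that $\chi_{\Om_{0n}}|v_n|^{-1}\overline{v_n}\nabla v_n$ converges almost everywhere to $\chi_{\Om_0}|v|^{-1}\overline{v}\nabla v$ and that $\Lambda$ is continuous and bounded on $(0,\infty)$ by \eqref{eq:G<1}, and conclude by the Vitali convergence theorem exactly as in \eqref{eq:limitn}. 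The resulting integral inequality is \eqref{eq:condv} for $v\in \Hspt^1(\Om)$, and the $L^{\Phi}$-dissipativity follows from Lemma \ref{lemma:fond}.

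The argument contains no real obstacle because the heavy analytic work (the reduction to \eqref{eq:condv}, the boundedness of $\Lambda$, and the Vitali-type passage to the limit) has already been carried out in Lemma \ref{lemma:fond} and in Corollary \ref{co:1}; the only thing to check is the purely algebraic identification of the integrand with the form \eqref{eq:polsuf} evaluated at $(X,Y)$.
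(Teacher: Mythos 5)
Your proposal is correct and follows essentially the same route as the paper: rewrite the integrand of \eqref{eq:condv} in terms of $X=\Re(|v|^{-1}\overline{v}\nabla v)$ and $Y=\Im(|v|^{-1}\overline{v}\nabla v)$ via the identities from Corollary \ref{co:1}, apply \eqref{eq:polsuf} pointwise with $t=|v|$, $\xi=X$, $\eta=Y$ to get \eqref{eq:inXY} and hence \eqref{eq:condv}, and conclude by Lemma \ref{lemma:fond}. The extra density/Vitali step you add is harmless but unnecessary, since the pointwise identities and the nonnegativity of the integrand already hold a.e.\ for $v\in\Hspt^{1}(\Om)$, which is how the paper applies them directly.
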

\begin{proof}
Let $v\in \Hspt^{1}(\Om)$ and define 
$X$ and $Y$ as in the proof of Corollary
\ref{co:1}.
Inequality \eqref{eq:polsuf} implies that
\eqref{eq:inXY} holds. As we know, this
means that \eqref{eq:condv}
is satisfied and the result follows from
Lemma \ref{lemma:fond}.
\end{proof}

\begin{corollary}\label{co:3}
If the operator $E$ has real coefficients
and satisfies condition \eqref{eq:E>=0} for almost every $x\in\Om$ and for any  $\xi\in\R^{N}$,
than it is $L^{\Phi}$-dissipative with respect to any $\Phi$.
\end{corollary}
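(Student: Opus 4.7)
The plan is to reduce the statement to Corollary \ref{co:2} and exploit the sign information we already have on $\Lambda$.

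First, I would observe that since $\A$ has real coefficients, $\Im \A(x) \equiv 0$ and $\Im \A^{*}(x) \equiv 0$. Consequently, the sufficient condition \eqref{eq:polsuf} in Corollary \ref{co:2} collapses to
$$
[1-\Lambda^{2}(t)] \la \Re \A(x)\, \xi,\xi\ra + \la \Re \A(x)\, \eta,\eta\ra \geq 0
$$
for almost every $x \in \Om$ and for every $t > 0$, $\xi,\eta \in \R^{N}$.

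Next, I would invoke the bound $-1 < \Lambda(t) < 1$ proved in \eqref{eq:G<1}, which yields $1 - \Lambda^{2}(t) > 0$ for every $t>0$. Together with the standing hypothesis $\la \Re \A(x)\, \xi,\xi\ra \geq 0$ (which of course forces $\la \Re \A(x)\, \eta,\eta\ra \geq 0$ as well), the displayed inequality is a sum of two non-negative terms, hence holds for every admissible $\vf$ in the class of Section \ref{subsec:phi}.

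Finally, Corollary \ref{co:2} then immediately gives the $L^{\Phi}$-dissipativity of $E$. Since this argument used only the positivity of $\Re \A$ and the universal bound $|\Lambda|<1$, it applies to every admissible $\vf$ (equivalently, every corresponding Young function $\Phi$), which is the assertion. I do not foresee a genuine obstacle: the content of the corollary is that the real nonnegative case is the ``easy'' one, already covered by the sufficient condition, because the problematic cross terms involving $\Im \A$ disappear.
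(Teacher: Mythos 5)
Your proposal is correct and is essentially identical to the paper's own (very brief) proof, which likewise derives the claim immediately from Corollary \ref{co:2} and the bound \eqref{eq:G<1}: with $\Im\A\equiv 0$ the cross terms in \eqref{eq:polsuf} vanish and the remaining expression is non-negative since $1-\Lambda^{2}(t)>0$ and $\la\Re\A(x)\xi,\xi\ra\geq 0$. No gaps.
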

\begin{proof}
It follows immediately from Corollary \ref{co:2}
and \eqref{eq:G<1}.
\end{proof}

\begin{remark}
We shall see later a class of operators for which the positiveness of polynomials \eqref{eq:polsuf} 
is also necessary for the $L^{\Phi}$-dissipativity. But there are no functions $\vf$ for which
the condition  \eqref{eq:polsuf}  is necessary. This is shown by the next example.
\end{remark}

\begin{example}
 Consider the operator $E$ in two independent
variables where the matrix of the coefficients is
	$$
	\A=\left(\begin{array}{cc}
	1 & i\g \\ -i\g & 1
	\end{array}\right)
	$$
	$\g$ being a real constant.
	The polynomial in $\xi,\eta$ in condition  \eqref{eq:polsuf} is given by
	$$
	{}[1-\Lambda^{2}(t)]|\xi|^2 + |\eta|^2 + 2\, \ga\, (\xi_2	\eta_1 - \xi_1 \eta_2).
	$$
	Writing this polynomial in the form
	$$
	(\eta_1+\ga\xi_2)^2 +(\eta_2 - \ga\xi_1)^2 +[1-\Lambda^{2}(t)-\ga^2]\, |\xi|^2
	$$
	it is clear that, if $|\ga|>1$, condition \eqref{eq:polsuf} cannot be satisfied for any $\xi,\eta\in\R^N$. However, the corresponding
	operator is the Laplacean, which is $L^{\Phi}$-dissipative for any $\vf$ (see Corollary \ref{co:3}).
\end{example}

The next results concerns  $\Phi$-strongly elliptic operators

\begin{lemma}
    Let $E$ be a\  $\Phi$-strongly elliptic operator. There exists a constant $\kappa$ such that
   for any complex valued $u\in H^1(\Om)$ such that
   $\vf(|u|)\, u \in H^1(\Om)$ we have
    \begin{equation}\label{eq:dinineq}
\Re  \la \A \nabla u, \nabla(\vf(|u|)\, u)\ra   \geq \kappa 
| \nabla(\sqrt{\vf(|u|)}\, u)|^{2}
\end{equation}
almost everywhere on the set $\{x\in \Om\ |\ u(x)\neq 0\}$.
\end{lemma}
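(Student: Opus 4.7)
The plan is to recycle the pointwise identity derived in the sufficiency part of Lemma \ref{lemma:fond}, and then apply the strict positivity assumption \eqref{eq:Phise}. First, set $v = \sqrt{\vf(|u|)}\, u$. By Lemma \ref{lemma:fipsi} we have $v \in H^1(\Om)$, and on the set $\Om_0 = \{x\in\Om \mid u(x)\neq 0\} = \{x\in\Om \mid v(x)\neq 0\}$ one has $u = \Theta(|v|)\, v$ and $\vf(|u|)\, u = [\Theta(|v|)]^{-1}\, v$ by \eqref{eq:defHGA} and \eqref{eq:H2psi}. The same computation carried out in the sufficiency proof of Lemma \ref{lemma:fond} then yields the pointwise identity
$$
\la \A \nabla u, \nabla(\vf(|u|)\, u)\ra = \la \A \nabla v, \nabla v\ra + \Lambda(|v|)\big(\la \A \nabla |v|, |v|^{-1}\overline{v}\nabla v\ra - \la \A(|v|^{-1}\overline{v}\nabla v), \nabla |v|\ra\big) -\Lambda^{2}(|v|)\la \A \nabla |v|, \nabla |v|\ra
$$
almost everywhere on $\Om_0$.

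Next, define the real vector fields $X = \Re(|v|^{-1}\overline{v}\nabla v)$ and $Y = \Im(|v|^{-1}\overline{v}\nabla v)$ on $\Om_0$, exactly as in the proof of Corollary \ref{co:1}. Taking the real part of the identity above and using the three algebraic relations recalled there, one obtains
$$
\Re \la \A \nabla u, \nabla(\vf(|u|)\, u)\ra = [1-\Lambda^{2}(|v|)]\la\Re \A X,X\ra + \la\Re \A Y,Y\ra + [1+\Lambda(|v|)]\la \Im \A X,Y\ra + [1-\Lambda(|v|)]\la \Im \A^{*} X,Y\ra.
$$

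Now apply the $\Phi$-strong ellipticity condition \eqref{eq:Phise} with $\xi = X$, $\eta = Y$ and $t = |v|$: the right-hand side is bounded below by $\kappa(|X|^2 + |Y|^2)$. Finally, since $|v|^{-1}\overline{v}$ has modulus $1$ on $\Om_0$, viewing $|v|^{-1}\overline{v}\nabla v = X + iY$ componentwise gives
$$
|X|^{2} + |Y|^{2} = \big| |v|^{-1}\overline{v}\nabla v \big|^{2} = |\nabla v|^{2} = |\nabla(\sqrt{\vf(|u|)}\, u)|^{2},
$$
which is precisely \eqref{eq:dinineq}. The only delicate point is justifying the pointwise chain-rule identity for the Sobolev functions $u$ and $v$ on $\Om_0$; this is where the proof of Lemma \ref{lemma:fond} required the regularization $g_\ep = \sqrt{|v|^2 + \ep^2}$, and the same regularization can be invoked here to establish the identity almost everywhere on $\Om_0$ before passing to real parts.
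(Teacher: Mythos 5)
Your proposal is correct and follows essentially the same route as the paper's own proof: set $v=\sqrt{\vf(|u|)}\,u$, invoke Lemma \ref{lemma:fipsi}, reuse the pointwise identity from the sufficiency part of Lemma \ref{lemma:fond}, rewrite in terms of $X$ and $Y$, and apply \eqref{eq:Phise} together with $|X|^{2}+|Y|^{2}=|\nabla v|^{2}$. Your closing remark about justifying the chain rule on $\Om_{0}$ via the regularization $g_\ep$ is a reasonable extra precaution, consistent with how the paper itself derives the identity.
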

\begin{proof}
Let us  define $v=\sqrt{\vf(|u|)}\, u$. 
By Lemma \ref{lemma:fipsi}, the function $v$ belongs to $H^1(\Om)$.
As in the proof of Lemma \ref{lemma:fond}, we find
\begin{gather*}
\Re \la \A \nabla u, \nabla(\vf(|u|)\, u)\ra\  =\\
\Re \Big[ \la \A\nabla v, \nabla v \ra +
\Lambda(|v|)\, \la (\A-\A^{*})\nabla |v|, |v|^{-1}\overline{v} \nabla v ) 
\ra  +\\
-\Lambda^{2}(|v|)\, \la \A\nabla |v|, \nabla |v| \ra \Big] 
\end{gather*}
on the set $\{x\in \Om\ |\ u(x)\neq 0\}=\{x\in \Om\ |\ v(x)\neq 0\}$.

This can be written as
\begin{gather*}
\Re \la \A \nabla u, \nabla(\vf(|u|)\, u)\ra  =\\
[1-\Lambda^{2}(|v|)] \la\Re \A X,X\ra 
+ \la\Re \A Y,Y\ra +\\
[1+\Lambda(|v|)] \la \Im \A X,Y\ra 
+
[1-\Lambda(|v|)] \la \Im \A^{*} X,Y\ra 
\end{gather*}
where $X$ and $Y$ are given by \eqref{eq:defXY}. Thanks to
the $\Phi$-strong ellipticity (see \eqref{eq:Phise}) we get
$$
\Re \la \A \nabla u, \nabla(\vf(|u|)\, u)\ra  \geq \kappa(|X|^{2} + |Y|^{2})
=\kappa\, |\nabla v|^{2}
$$
and \eqref{eq:dinineq} is proved.
\end{proof}

This lemma implies  the next Corollary 
(see Dindo\v{s} and Pipher \cite[Th. 2.4, pp.263--265]{dindos1} for a similar result
in the $L^p$ case).

\begin{corollary}
      Let $E$ be a\  $\Phi$-strongly elliptic operator. There exists a constant $\kappa$ such that
    for any nonnegative $\chi\in L^{\infty}(\Om)$ and any complex valued $u\in H^1(\Om)$ such that
   $\vf(|u|)\, u \in H^1(\Om)$ we have
      $$
\Re \int_{\Om} \la \A \nabla u, \nabla(\vf(|u|)\, u)\ra\, \chi(x) dx \geq \kappa \int_{\Om}
| \nabla(\sqrt{\vf(|u|)}\, u)|^{2} \chi(x)\, dx\, .
$$
\end{corollary}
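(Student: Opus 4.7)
The plan is to reduce the corollary to a direct integration of the pointwise inequality established in the preceding lemma. Nothing deep is needed beyond a careful handling of the set where $u$ vanishes.

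First I would invoke the preceding lemma, which provides a constant $\kappa>0$ (the same $\kappa$ appearing in the $\Phi$-strong ellipticity estimate \eqref{eq:Phise}) such that the pointwise inequality
$$
\Re \la \A \nabla u, \nabla(\vf(|u|)\, u)\ra \geq \kappa\, |\nabla(\sqrt{\vf(|u|)}\, u)|^{2}
$$
holds a.e.\ on $\Om_{0}=\{x\in\Om\ |\ u(x)\neq 0\}$. By Lemma \ref{lemma:fipsi}, both $\vf(|u|)\, u$ and $\sqrt{\vf(|u|)}\, u$ belong to $H^{1}(\Om)$, so all gradients appearing in the inequality make sense as $L^{2}$ functions.

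Next I would check that the inequality extends trivially to the complement $\Om\setminus\Om_{0}=\{u=0\}$. By the standard fact that a function in $H^{1}(\Om)$ has zero weak gradient almost everywhere on the level set where it itself vanishes, we have $\nabla u=0$, $\nabla(\vf(|u|)\, u)=0$ and $\nabla(\sqrt{\vf(|u|)}\, u)=0$ a.e.\ on $\{u=0\}$ (recall the convention stated in the Introduction and in Lemma \ref{lemma:fond} that the corresponding integrands are extended by zero where $u$ vanishes). Hence both sides of the pointwise inequality are zero a.e.\ on $\{u=0\}$, and the inequality remains valid a.e.\ on all of $\Om$.

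Finally I would multiply by the nonnegative bounded weight $\chi\in L^{\infty}(\Om)$ and integrate over $\Om$. Since $\A\in L^\infty$, $\nabla u\in L^{2}$ and $\nabla(\vf(|u|)\, u)\in L^{2}$, the left-hand integrand is in $L^{1}(\Om)$, and the boundedness of $\chi$ preserves integrability on both sides; multiplication by $\chi\geq 0$ preserves the inequality. This immediately yields
$$
\Re\int_{\Om}\la \A\nabla u,\nabla(\vf(|u|)\, u)\ra\, \chi(x)\, dx
\geq \kappa\int_{\Om}|\nabla(\sqrt{\vf(|u|)}\, u)|^{2}\, \chi(x)\, dx,
$$
with the same constant $\kappa$ furnished by the lemma. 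There is no genuine obstacle here; the only point requiring attention is the bookkeeping on $\{u=0\}$, which is handled by the standard Sobolev fact recalled above together with the zero-extension convention used throughout the paper.
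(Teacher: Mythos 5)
Your proposal is correct and follows the same route as the paper, which simply integrates the pointwise inequality \eqref{eq:dinineq} from the preceding lemma against the weight $\chi$; your additional care about the set $\{u=0\}$ (where all three gradients vanish a.e.\ by the standard Sobolev fact, consistent with the paper's zero-extension convention) is a sound elaboration of a step the paper leaves implicit.
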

\begin{proof}
It follows immediately from inequality \eqref{eq:dinineq}.
\end{proof}

\section{A necessary and sufficient condition}\label{sec:cnes}

The aim of this section is to give a necessary and sufficient
condition for the $L^{\Phi}$-dissipativity of the operator $E$.

\begin{theorem}\label{th:main}
  Let the matrix $\Im \A$ be symmetric, i.e. 
  $\Im \A^{t}=\Im \A$. 
  Then the operator $E$  is 
  $L^{\Phi}$-dissipative if, and only if, 
  \begin{equation}\label{eq:cmcond}
|s\, \vf'(s)| \, | \la\Im \A (x)\, \xi,\xi\ra | 
\leq 2\, \sqrt{\vf(s)\, [s\, \vf(s)]'} 
\, \la \Re \A(x) \, \xi,\xi\ra 
\end{equation}
for almost every $x\in\Om$ and for any  $s>0, \xi\in\R^{N}$.
\end{theorem}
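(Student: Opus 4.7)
The plan is to invoke Lemma \ref{lemma:fond} and then reduce the resulting integral inequality \eqref{eq:condv} to the pointwise algebraic condition \eqref{eq:cmcond}. Under the symmetry $\Im\A=\Im\A^t$, one has $\Im(\A-\A^t)=0$ and $\Im(\A-\A^*)=2\,\Im\A$, so the three identities used in the proof of Corollary \ref{co:1} reduce \eqref{eq:condv} to
$$
\int_\Om \bigl\{[1-\Lambda^2(|v|)]\langle\Re\A\,X,X\rangle + \langle\Re\A\,Y,Y\rangle + 2\Lambda(|v|)\langle\Im\A\,X,Y\rangle\bigr\}\,dx \geq 0
$$
for all $v\in\Hspt^1(\Om)$, with $X=\nabla|v|$ and $Y=\Im(|v|^{-1}\bar{v}\nabla v)$.

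For sufficiency, I will show that \eqref{eq:cmcond} implies the pointwise bound
$$
Q(x,t,\xi,\eta):=[1-\Lambda^2(t)]\langle\Re\A(x)\xi,\xi\rangle+\langle\Re\A(x)\eta,\eta\rangle+2\Lambda(t)\langle\Im\A(x)\xi,\eta\rangle\geq 0
$$
for a.e.\ $x$, all $t>0$, $\xi,\eta\in\R^N$. Since this is \eqref{eq:polsuf} in the symmetric case, Corollary \ref{co:2} then yields $L^\Phi$-dissipativity. Algebraically: first, \eqref{eq:cmcond} forces $\Re\A\geq 0$, because any $\xi$ with $\langle\Re\A\xi,\xi\rangle<0$ would make the RHS of \eqref{eq:cmcond} negative while the LHS is nonnegative. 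Using the identities \eqref{eq:G=}--\eqref{eq:1-G2}, \eqref{eq:cmcond} recasts as $|\Lambda(t)|\,|\langle\Im\A\xi,\xi\rangle|\leq\sqrt{1-\Lambda^2(t)}\,\langle\Re\A\xi,\xi\rangle$. Regularizing $\Re\A\mapsto\Re\A+\epsilon I$ and simultaneously diagonalizing the symmetric pair $(\Re\A+\epsilon I,\Im\A)$ reduces the diagonal bound to an operator-norm estimate on $(\Re\A+\epsilon I)^{-1/2}\Im\A(\Re\A+\epsilon I)^{-1/2}$; scalar Cauchy--Schwarz then upgrades this to the off-diagonal form $\Lambda^2\langle\Im\A\xi,\eta\rangle^2\leq(1-\Lambda^2)\langle\Re\A\xi,\xi\rangle\langle\Re\A\eta,\eta\rangle$, and combining with the AM--GM inequality $[1-\Lambda^2]\langle\Re\A\xi,\xi\rangle+\langle\Re\A\eta,\eta\rangle\geq 2\sqrt{(1-\Lambda^2)\langle\Re\A\xi,\xi\rangle\langle\Re\A\eta,\eta\rangle}$ yields $Q\geq 0$. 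Letting $\epsilon\to 0$ closes the argument.

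For necessity, I take $v=\rho\,e^{i\theta}$ with $\rho\in\Cspt^\infty(\Om)$, $\rho\geq 0$, and $\theta\in C^\infty(\Om)$ real, so $X=\nabla\rho$, $Y=\rho\nabla\theta$, and the reduced inequality reads
$$
\int_\Om\bigl\langle\BB(x,\rho)(\nabla\rho,\rho\nabla\theta),(\nabla\rho,\rho\nabla\theta)\bigr\rangle\,dx\geq 0,
$$
with $\BB(x,t)$ the symmetric $2N\times 2N$ block matrix whose diagonal blocks are $[1-\Lambda^2(t)]\Re\A(x)$ and $\Re\A(x)$ and whose off-diagonal blocks are $\Lambda(t)\Im\A(x)$. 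A two-field variant of Lemma \ref{lemma:BB}, obtained by applying the polar-coordinate concentration/oscillation scheme of Section \ref{sec:lemma} independently to $\rho$ and $\theta$, yields the pointwise inequality $\langle\BB(x,t)Z,Z\rangle\geq 0$ for a.e.\ $x$, all $t>0$ and $Z\in\R^{2N}$, which is precisely $Q\geq 0$. Finally, minimizing $Q$ over $\eta$ (after $\Re\A\mapsto\Re\A+\epsilon I$) gives $(1-\Lambda^2)\langle\Re\A\xi,\xi\rangle\geq\Lambda^2\langle(\Re\A+\epsilon I)^{-1}\Im\A\xi,\Im\A\xi\rangle$, and Cauchy--Schwarz in the $(\Re\A+\epsilon I)$-inner product, followed by $\epsilon\to 0$, delivers \eqref{eq:cmcond}.

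The principal obstacle is the two-field extension of Lemma \ref{lemma:BB}: we must construct pairs $(\rho,\theta)$ that localize around an interior point $x_0$, independently prescribe $\nabla\rho(x_0)$ and $\rho(x_0)\nabla\theta(x_0)$, and keep $\rho$ close to a fixed value $t_0$, all while making the transition/boundary contributions negligible. The one-field template of Section \ref{sec:lemma} — a radial bump $\beta$ with rapidly growing $\dot{\beta}^2$, combined with an angular cutoff $\gamma$ satisfying \eqref{condpsi}--\eqref{condga} — adapts naturally because $\rho$ and $\theta$ are genuinely independent degrees of freedom, but the bookkeeping grows heavier because the coupling term $2\Lambda(\rho)\langle\Im\A\nabla\rho,\rho\nabla\theta\rangle$ ties both gradients together through the $\rho$-dependent coefficient $\Lambda(\rho)$.
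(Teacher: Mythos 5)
Your sufficiency argument is correct but follows a different route from the paper's. Where you regularize $\Re\A\mapsto\Re\A+\ep I$, pass to $M=(\Re\A+\ep I)^{-1/2}\Im\A\,(\Re\A+\ep I)^{-1/2}$, and use the fact that for a \emph{symmetric} matrix the numerical radius equals the operator norm to upgrade the diagonal bound $|\Lambda(t)|\,|\la\Im\A\xi,\xi\ra|\leq\sqrt{1-\Lambda^{2}(t)}\,\la\Re\A\xi,\xi\ra$ to the off-diagonal estimate and then conclude by AM--GM, the paper instead minimizes $\Sm(\xi,\eta)=\la\Re\A\xi,\xi\ra+\la\Re\A\eta,\eta\ra+\ga\la\Im\A\xi,\eta\ra$ over $|\xi|^{2}+|\eta|^{2}=1$, writes the Lagrange system for a minimizer, subtracts the two equations and applies \eqref{eq:cmcond} to the vector $\xi_{0}-\eta_{0}$ to exclude a negative minimum, and finally rescales $\xi$ to reach \eqref{eq:polsuf2}. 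Both arguments land on \eqref{eq:polsuf2} and invoke Corollary \ref{co:2}; yours is arguably more transparent, and it uses the symmetry of $\Im\A$ at exactly the same point the paper does.

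The necessity half has a genuine gap, and it is the one you flag yourself: the ``two-field variant of Lemma \ref{lemma:BB}'' is asserted, not proved, and it is not a routine adaptation. The construction of Section \ref{sec:lemma} uses $v=\be(\ro)\ga(\om)$, whose dominant contribution as $\lam_{m}\to\infty$ comes from the radial derivative, so it prescribes only one gradient direction (the radial one) at the concentration point; to prescribe $\nabla\ro$ and $\ro\nabla\theta$ in two independent directions while keeping $\ro$ pinned near a fixed $t_{0}>0$ would require a genuinely new construction, and the coupling term $2\Lambda(\ro)\,\ro\,\la\Im\A\nabla\ro,\nabla\theta\ra$ is of the same order as the terms you want to isolate. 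The paper sidesteps all of this by choosing the phase $\si=\frac{\mu}{2}\log(\ro^{2}+\ep^{2})$, so that $\ro\nabla\si\to\mu\nabla\ro$ as $\ep\to0^{+}$ and the two-field integral inequality collapses to the one-field inequality $\int_{\Om}\la\BB_{\mu}(x,|\ro|)\nabla\ro,\nabla\ro\ra\,dx\geq0$ with $\BB_{\mu}(x,t)=[1-\Lambda^{2}(t)+\mu^{2}]\Re\A(x)+2\mu\Lambda(t)\Im\A(x)$, to which Lemma \ref{lemma:BB} applies exactly as stated. This is enough because \eqref{eq:cmcond} only involves the diagonal values $\la\Im\A\xi,\xi\ra$ and $\la\Re\A\xi,\xi\ra$: the restriction of the quadratic form to $\eta=\mu\xi$, followed by the discriminant condition in $\mu$ and Corollary \ref{co:1}, already yields \eqref{eq:cmcond}, so the full pointwise positivity for independent $\xi,\eta$ (and your subsequent $\ep$-regularized minimization over all $\eta$) is never needed. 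If you replace your two-field lemma by this choice of phase, your necessity argument closes.
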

\begin{proof}
\textit{Sufficiency.} 
Let us prove that 
\eqref{eq:cmcond} implies inequality \eqref{eq:polsuf}, which, 
for the simmetricity of $\Im\A$, becomes
\begin{equation}\label{eq:polsuf2}
{}[1-\Lambda^{2}(t)] \la\Re \A(x)\, \xi,\xi\ra 
+ \la\Re \A(x)\, \eta,\eta\ra + 
2\, \Lambda(t) \la \Im \A(x)\, \xi,\eta \ra 
\geq 0
\end{equation}
for almost every $x\in\Om$ and for any   $t>0,  \xi,\eta \in \R^N$.

Fix $x\in\Om$ in such a way \eqref{eq:cmcond} holds, $t>0$ and define
$$
\Sm(\xi,\eta) = \la\Re \A(x)\, \xi,\xi\ra  +
\la\Re \A(x)\, \eta,\eta\ra + \ga\, \la\Im \A(x)\, \xi,\eta\ra
$$
where  
$$
\ga =\frac{s\, \vf'(s)}{\sqrt{\vf(s)\, [s\, \vf(s)]'}}\, ,
\qquad s=\ze(t)\, ,
$$
($\ze$ is the funtcion introduced in section \ref{subsec:aux}).

Let
$$
\lam = \min_{|\xi|^{2}+|\eta|^{2}=1}\Sm(\xi,\eta)\, .
$$

There exist $(\xi_{0},\eta_{0})$ such that
$|\xi_{0}|^{2}+|\eta_{0}|^{2}=1$ and
$\lam = \Sm(\xi_{0},\eta_{0})$. This vector satisfies
the algebraic system
$$
\begin{cases}
 \Re(\A+\A^{t})\,\xi_{0} + \ga \Im\A \eta_{0}= 2\, \lam\, \xi_{0}  \\ 
 \Re(\A+\A^{t})\,\eta_{0} + \ga \Im\A \xi_{0}= 2\, \lam\, \eta_{0}\, . 
\end{cases}
$$
This implies
$$
\Re(\A+\A^{t})\,(\xi_{0}-\eta_{0})
-\ga \Im\A\, (\xi_{0}-\eta_{0}) = 2\, \lam\, (\xi_{0}-\eta_{0})\, .
$$
and therefore
\begin{equation}\label{eq:condalg}
2\,\la\Re\A\,(\xi_{0}-\eta_{0}), \xi_{0}-\eta_{0}\ra 
-\ga \la \Im\A\, (\xi_{0}-\eta_{0}), \xi_{0}-\eta_{0}\ra = 
2\, \lam\, |\xi_{0}-\eta_{0}|^{2} .
\end{equation}

The left hand-side is nonnegative because of \eqref{eq:cmcond}.
If $\lam <0$, \eqref{eq:condalg} implies $\xi_{0}=\eta_{0}$.
In this case
$$
\lam = \Sm(\xi_{0},\xi_{0})=
2\, \la\Re \A(x)\, \xi_{0},\xi_{0}\ra + \ga\, \la\Im \A(x)\, 
\xi_{0},\xi_{0}\ra\, ;
$$
but this is nonnegative because of \eqref{eq:cmcond} and we get a 
contradiction. Therefore $\lam\geq 0$ and 
$\Sm(\xi,\eta)\geq 0$, for any $\xi,\eta\in\R^{N}$.

We have also $\Sm(-\sqrt{1-\Lambda^{2}(t)}\, 
\xi,\eta)\geq 0$, i.e.
\begin{equation}\label{eq:conxiG}
{}[1-\Lambda^{2}(t)] \la\Re \A(x)\, \xi,\xi\ra 
+ \la\Re \A(x)\, \eta,\eta\ra - 
\ga\, \sqrt{1-\Lambda^{2}(t)}\, \la \Im \A(x)\, \xi,\eta \ra 
\geq 0.
\end{equation}

On the other hand, \eqref{eq:G=} and \eqref{eq:1-G2} show that
$$
\ga\, \sqrt{1-\Lambda^{2}(t)} = \frac{2\,s\, \vf'(s)}{s\,\vf'(s)+2\, 
\vf(s)} = - 2\, \Lambda(t)
$$
and then \eqref{eq:conxiG} coincides with \eqref{eq:polsuf2}.

Corollary \ref{co:2} shows that the operator $E$ is $L^{\Phi}$-dissipative.

\textit{Necessity.} As in the proof of Corollary \ref{co:1},
the $L^{\Phi}$-dissipativity of $E$ implies 
$$
\int_{\Om}\{[1-\Lambda^{2}(|v|)] \la\Re \A X,X\ra 
+ \la\Re \A Y,Y\ra +
2\, \Lambda(|v|) \la \Im \A X,Y\ra 
 \} dx \geq 0\, ,
$$
for any $v\in\Cspt^{1}(\Om)$ (see \eqref{eq:inXY}).
Setting $v(x)=\ro(x)\,e^{i\, \si(x)}$, where $\ro\in\Cspt^{\infty}(\Om)$ 
and $\si\in C^{\infty}(\Om)$ are real valued, we get
$
|v|^{-1}\overline{v} \nabla v = |\ro|^{-1}\ro\, \nabla \ro + i\, 
|\ro|\, \nabla \si
$ on the set $\{x\in\Om\ |\ \ro(x)\neq 0\}$. 
It follows
\begin{equation}\label{eq:takes}
\begin{gathered}
\int_{\Om}\{[1-\Lambda^{2}(|\ro|)] \la\Re \A \nabla \ro, \nabla \ro\ra
+ \ro^{2}\la\Re \A \nabla \si, \nabla \si\ra
+ \\
2\, \Lambda(|\ro|) \, \ro\, \la \Im \A \nabla \ro, \nabla \si\ra 
 \} dx \geq 0
\end{gathered}
\end{equation}
for any real valued $\ro\in\Cspt^{\infty}(\Om), \si\in C^{\infty}(\Om)$.

We choose $\si$ by the equality
$$
\si(x)=\frac{\mu}{2}\log(\ro^{2}+\ep^{2})
$$
where $\mu\in\R$ and $\ep>0$. Inequality \eqref{eq:takes} takes the 
form
\begin{gather*}
\int_{\Om}\{[1-\Lambda^{2}(|\ro|)] \la\Re \A \nabla \ro, \nabla \ro\ra
+ \mu^{2}\frac{\ro^{4}}{(\ro^{2}+\ep)^{2}}\la\Re \A \nabla \ro, \nabla \ro\ra
+ \\
2\, \mu\, \frac{\ro^{2}}{\ro^{2}+\ep} \Lambda(|\ro|) \, \la \Im \A 
\nabla \ro, \nabla \ro\ra 
 \} dx \geq 0.
\end{gather*}

Letting $\ep\to 0^{+}$ we find
\begin{gather*}
\int_{\Om}\{[1-\Lambda^{2}(|\ro|)] \la\Re \A \nabla \ro, \nabla \ro\ra
+ \mu^{2} \la\Re \A \nabla \ro, \nabla \ro\ra
+ \\
2\, \mu\,  \Lambda(|\ro|) \, \la \Im \A 
\nabla \ro, \nabla \ro\ra 
 \} dx \geq 0.
\end{gather*}

This inequality holding for any real valued 
$\ro\in\Cspt^{\infty}(\Om)$, by Lemma \ref{lemma:BB}  we get
$$
[1-\Lambda^{2}(t)] \la\Re \A(x)\,  \xi,  \xi\ra
+ \mu^{2} \la\Re \A(x)\,  \xi,  \xi\ra
+ \\
2\, \mu\,  \Lambda(t) \, \la \Im \A(x)\, 
 \xi,  \xi\ra \geq 0
$$
for almost every $x\in\Om$ and for any  $t>0,  \xi\in\R^{n}$. The arbitrariness of $\mu\in\R$
leads to
$$
\Lambda^{2}(t) \la \Im \A(x)\, 
 \xi,  \xi\ra^{2} \leq  [1-\Lambda^{2}(t)]
 \, \la\Re \A(x)\,  \xi,  \xi\ra^{2}.
$$

Recalling \eqref{eq:1-G2} and Corollary \ref{co:1}, we can write
$$
|\Lambda(t)|\, | \la \Im \A(x)\, 
 \xi,  \xi \ra |  \leq  \sqrt{1-\Lambda^{2}(t)}\, 
 \, \la\Re \A(x)\,  \xi,  \xi\ra.
$$

Finally, setting $s=\ze(t)$  and keeping in mind the expressions \eqref{eq:G=} and \eqref{eq:1-G2}, 
the last inequality reads as
$$
 \frac{|s\, \vf'(s)|}{s\,\vf'(s)+2\, 
\vf(s)} | \la \, \Im \A(x)\, 
 \xi,  \xi\ra |  \leq 
 \frac{2\, 
\sqrt{\vf(s)\, (s\,\vf'(s)+\, \vf(s))}}{s\,\vf'(s)+2\, \vf(s)}
 \, \la\Re \A(x)\,  \xi,  \xi\ra ,
$$
 i.e. \eqref{eq:cmcond}.
\end{proof}

\begin{remark}
The proof of Theorem \ref{th:main}  shows that condition \eqref{eq:cmcond} holds if and only if
the inequality \eqref{eq:polsuf2} is satisfied for almost every $x\in\Om$ and for any $t>0,  \xi,\eta \in \R^N$. This
means that conditions \eqref{eq:polsuf} are necessary and sufficient for the $L^{\Phi}$-dissipativity
for the operators considered in Theorem \ref{th:main}.
\end{remark}

\begin{remark}
Suppose that the condition $\Im\A=\Im\A^t$ is not satisfied.
Arguing as in the proof of Theorem \ref{th:main}, one can prove that
condition \eqref{eq:cmcond} is still necessary for the $L^{\Phi}$-dissipativity of the operator $E$. 
However in general it is not sufficient, whatever the function $\vf$ may be.
This is shown by the next example. 
\end{remark}

\begin{example}
Let $n=2$ , $\Om$ a bounded domain and
$$
	\A=\left(\begin{array}{cc}
	1 & i\lam x_1 \\ -i\lam x_1 & 1
	\end{array}\right)
	$$

	Since $\la \Re\A \xi,\xi\ra = |\xi|^{2}$ and 
	$\la \Im\A \xi,\xi\ra = 0$ for any $\xi\in\R^{N}$, condition
	\eqref{eq:cmcond} is  satisfied.
	
If the corresponding operator $Eu=\Delta u + i\, \lam \, \partial_2 u$ 
is $L^{\Phi}$-dissipative,
then
\begin{equation}\label{eq:exfinale}
\Re \int_\Om \la \Delta u + i\, \lam \, \partial_2 u, u\ra\, \vf(|u|)\, 
dx \leq 0, \qquad \forall\ u\in \Cspt^\infty(\Om).
\end{equation}

Take $u(x)=\ro(x)\, e^{i\, t \, x_{2}}$, where $\ro\in 
\Cspt^\infty(\Om)$ is real valued and $t\in\R$.
Since $ \la Eu,u\ra = \ro[\Delta \ro + 2\, i\, t\, \partial_{2}\ro - 
t^{2}\ro
+ i\,\lam\,(\partial_{2}\ro + i t \ro)]$,
condition \eqref{eq:exfinale} implies
\begin{equation}\label{eq:imposs}
\int_{\Om}\ro\, \Delta \ro\, \vf(|\ro|)\, dx - \lam\, t \int_{\Om}\ro^{2} 
\vf(|\ro|)\, dx -t^{2}\int_{\Om}\ro^{2} 
\vf(|\ro|)\, dx \leq 0
\end{equation}
for any $t, \lam\in\R$. The function $\vf$ being positive, we can 
choose $\ro$ in such a way
$$
\int_{\Om}\ro^{2} 
\vf(|\ro|)\, dx >0.
$$

Taking  
$$
\lam^{2}>4 \int_{\Om}\ro\, \Delta \ro\, \vf(|\ro|)\, dx 
\left(\int_{\Om}\ro^{2} 
\vf(|\ro|)\, dx\right)^{-1},
$$
inequality \eqref{eq:imposs} is impossible for all $t\in\R$. Thus
$E$ is not $L^{\Phi}$-dissipative, although \eqref{eq:cmcond} is satisfied.

\end{example}

\begin{corollary}\label{co:4}
	Let the matrix $\Im \A$ be symmetric, i.e. 
  $\Im \A^{t}=\Im \A$. 
      If 
\begin{equation}\label{eq:lam0}
	  \lam_{0}=
	\sup_{s>0} 
	\frac{|s\, \vf'(s)|}
 	{2\, \sqrt{\vf(s)\, [s\, \vf(s)]'}} < +\infty, 
\end{equation}
then the operator $E$   is 
  $L^{\Phi}$-dissipative if, and only if, 
    \begin{equation}\label{eq:cmcond2}
\lam_{0}\,  | \la\Im \A (x)\, \xi,\xi\ra | 
\leq 
 \la \Re \A(x) \, \xi,\xi\ra 
\end{equation}
for almost every $x\in\Om$ and for any $ \xi\in\R^{N}$. If $\lam_{0}=+\infty$  the operator $E$  is $L^{\Phi}$-dissipative
 if and only if $\Im\A\equiv 0$ and condition \eqref{eq:E>=0}
is satisfied.
\end{corollary}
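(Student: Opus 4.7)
My plan is to derive Corollary \ref{co:4} as an immediate consequence of Theorem \ref{th:main}, essentially by decoupling the dependence on $s$ from the dependence on $x$ and $\xi$ in condition \eqref{eq:cmcond}. The factor $2\sqrt{\vf(s)\,[s\vf(s)]'}$ is strictly positive for every $s>0$ by condition \ref{item2} (since $(s\vf(s))' > 0$ and $\vf(s)>0$), so I can divide \eqref{eq:cmcond} through by it to rewrite the condition as
\begin{equation*}
\frac{|s\,\vf'(s)|}{2\sqrt{\vf(s)\,[s\vf(s)]'}}\, |\la\Im\A(x)\xi,\xi\ra| \leq \la\Re\A(x)\xi,\xi\ra,
\end{equation*}
which is to hold for a.e.\ $x\in\Om$, all $s>0$, and all $\xi\in\R^N$. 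Since the left factor depends only on $s$ and the right factor depends only on $x$ and $\xi$, the requirement that the inequality hold simultaneously for every $s>0$ is equivalent to requiring it for the supremum in $s$, i.e.\ to
\begin{equation*}
\lam_0\, |\la\Im\A(x)\xi,\xi\ra| \leq \la\Re\A(x)\xi,\xi\ra
\end{equation*}
for a.e.\ $x\in\Om$ and every $\xi\in\R^N$.

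For the case $\lam_0 < +\infty$, this is exactly \eqref{eq:cmcond2}, and Theorem \ref{th:main} then gives that $E$ is $L^{\Phi}$-dissipative if and only if \eqref{eq:cmcond2} holds.

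For the case $\lam_0 = +\infty$, suppose $E$ is $L^{\Phi}$-dissipative. Then for each fixed $x$ (outside a null set) and $\xi$, the inequality above must hold for a sequence $s_n$ with $|s_n\vf'(s_n)|/(2\sqrt{\vf(s_n)\,[s_n\vf(s_n)]'}) \to +\infty$; since $\la\Re\A(x)\xi,\xi\ra$ is finite, this forces $\la\Im\A(x)\xi,\xi\ra = 0$. Because $\Im\A$ is assumed symmetric, the vanishing of the associated real quadratic form on all of $\R^N$ implies $\Im\A(x) = 0$ a.e., and the residual inequality collapses to $0 \leq 2\sqrt{\vf(s)\,[s\vf(s)]'}\,\la\Re\A(x)\xi,\xi\ra$, i.e.\ \eqref{eq:E>=0}. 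Conversely, if $\Im\A \equiv 0$ and \eqref{eq:E>=0} holds, then \eqref{eq:cmcond} is trivially satisfied and Theorem \ref{th:main} yields dissipativity.

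The main point to be careful about is the decoupling step: one has to note that the condition in Theorem \ref{th:main} is a pointwise inequality that must hold for every $s>0$, so only the supremum in $s$ of the $s$-dependent factor matters, and that this supremum is finite precisely in the first case of the corollary. Everything else is straightforward algebra combined with the elementary fact that a symmetric real matrix whose associated quadratic form vanishes identically must itself be zero.
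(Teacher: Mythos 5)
Your proposal is correct and follows essentially the same route as the paper: the finite-$\lam_0$ case is read off from Theorem \ref{th:main} by decoupling the $s$-dependent factor (which the paper dispatches with ``follows immediately''), and in the $\lam_0=+\infty$ case necessity is extracted from \eqref{eq:cmcond} exactly as you do, with sufficiency obtained in the paper via Corollary \ref{co:3} rather than by reapplying Theorem \ref{th:main} --- an equivalent step. No gaps.
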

\begin{proof}
If $\lam_{0}<+\infty$, the result follows immediately from Theorem 
\ref{th:main}. 
If $\lam_{0}=+\infty$  and the operator $E$  is $L^{\Phi}$-dissipative, inequality \eqref{eq:cmcond} 
implies $\la\Im \A \xi,\xi\ra=0$, $\la \Re \A(x) \xi,\xi\ra\geq 0$ for almost every $x\in\Om$ and for any   $ \xi\in\R^{N}$.  
Therefore  $\Im \A\equiv 0$ and condition   \eqref{eq:E>=0}
is satisfied. The viceversa was proved in Corollary \ref{co:3}.
\end{proof}

\begin{remark}
If we use the function $\Phi$ (see \eqref{eq:Phiphi}), 
condition \eqref{eq:cmcond} can be written as
$$
|s\, \Phi''(s) - \Phi'(s)| \, | \la\Im \A (x)\, \xi,\xi\ra | 
\leq 2 \sqrt{s\, \Phi'(s)\, \Phi''(s)} 
\, \la \Re \A(x) \, \xi,\xi\ra 
$$
for almost every $x\in\Om$ and for any $s>0,  \xi\in\R^{N}$. In the same way, 
formula  \eqref{eq:lam0} becomes
$$	  \lam_{0}=
	\sup_{s>0} 
	\frac{|s\, \Phi''(s) - \Phi'(s)|}
 	{2 \sqrt{s\, \Phi'(s)\, \Phi''(s)}} < +\infty. $$
\end{remark}

We end this section by some examples in which we indicate both 
the functions $\Phi$ and $\vf$. It is easy to verify that in each example the
function $\vf$ satisfies conditions  \ref{item1}-\ref{item5} of section \ref{subsec:phi}.

\begin{example}
	If $\Phi(s)=s^p$, i.e. $\vf(s)=p\, s^{p-2}$, which corresponds to 
	$L^{p}$ norm, the function in \eqref{eq:lam0} is constant and
	$\lam_{0}=|p-2|/(2\sqrt{p-1})$. In this way we reobtain  Theorem 1 of
	\cite[p.1079]{cialmaz}.
\end{example}

\begin{example}
	Let us consider $\Phi(s)=s^{p}\log(s+e)$ ($p>1$), which is the
	Young function corresponding to the Zygmund space $L^{p}\, \log L$.
	This is equivalent to say $\vf(s)=p s^{p-2}\log(s+e) + 
	s^{p-1}(s+e)^{-1}$.
	By a direct computation we find
	\begin{equation}
\begin{gathered}
\frac{|s\, \Phi''(s) - \Phi'(s)|}
 	{2\sqrt{s\, \Phi'(s)\, \Phi''(s)}} =\\
\frac{\left| p(p-2)\log(s+e)+\frac{(2p-1)s}{s+e} - \frac{s^2}{(s+e)^{2}} \right| }
{2  \sqrt{\left(p\log(s+e) + \frac{s}{s+e}\right) \left(p(p-1)\log(s+e) + \frac{2ps}{s+e} - \frac{s^2}{(s+e)^{2}}\right) }} \, .
\label{eq:supPhi}
\end{gathered}
\end{equation}

Since
$$
\lim_{s\to 0^+}\frac{|s\, \Phi''(s) - \Phi'(s)|}
 	{2\sqrt{s\, \Phi'(s)\, \Phi''(s)}} = \lim_{s\to +\infty}\frac{|s\, \Phi''(s) - \Phi'(s)|}
 	{2\sqrt{s\, \Phi'(s)\, \Phi''(s)}} = \frac{|p-2|}{2\sqrt{p-1}}
$$
the function is bounded. Then we have  the 
$L^{\Phi}$-dissipativity of the operator $E$  
if, and only if, \eqref{eq:cmcond2} holds,
where  $\lam_{0}$ is the $\sup$ of the function  \eqref{eq:supPhi}
in $\R^{+}$.\end{example}

%

\begin{example}
Let us consider the function $\Phi(s)=\exp(s^{p})-1$,
i.e. $\vf(s)=p\, s^{p-2} \exp(s^{p})$.
In this case
	$$\frac{|s\, \Phi''(s) - \Phi'(s)|}
 	{2\sqrt{s\, \Phi'(s)\, \Phi''(s)}} =
	\frac{|p\, s^p+ p -2|}{2 \sqrt{(p\, s^p +p -1)}}$$
and $\lam_{0}=+\infty$. In view of Corollary \eqref{co:4}, the operator $E$  is
$L^{\Phi}$-dissipative, i.e.
$$
	\Re \int_{\Om} \la \A\nabla u, \nabla[u\, |u|^{p-2} \exp(|u|^p) ]\ra dx \geq 0
	$$
 for any $u\in\Hspt^{1}(\Om)$ such that $|u|^{p-2} \exp(|u|^p)\, u\in\Hspt^{1}(\Om)$, if and only if the operator $E$ has real 
coefficients and condition \eqref{eq:E>=0}
is satisfied.
\end{example}

\begin{example}
	Let $\Phi(s)=s-\arctan s$, i.e. $\vf(s)=s/(s^{2}+1)$. In this case
	$$
	\frac{|s\, \Phi''(s) - \Phi'(s)|}
 	{2\sqrt{s\, \Phi'(s)\, \Phi''(s)}} =
	\frac{|s^{2}-1|}{2 \sqrt{2(s^{2}+1})}
	$$
	and $\lam_{0}=+\infty$. As in the previous example, we have that
	$$
	\Re \int_{\Om} \la\A\nabla u , \nabla\left(\frac{|u|\, u}{|u|^{2}+1}\right)\ra dx \geq 0
	$$
for any $u\in\Hspt^{1}(\Om)$ such that $|u|\, u /(|u|^{2}+1)\in\Hspt^{1}(\Om)$,  if and only if the operator $E$ has real 
coefficients and condition \eqref{eq:E>=0}
is satisfied.
\end{example}

\begin{example}
	Let $\Phi(s)=s^{4}/(s^{2}+1)$, i.e. $\vf(s)=2\,s^{2}(2+s^{2})/(s^{2}+1)^{2}$. In this case
	$$
	\frac{|s\, \Phi''(s) - \Phi'(s)|}
 	{2\sqrt{s\, \Phi'(s)\, \Phi''(s)}} =
	\frac{2}{\sqrt{(s^{2}+1)(s^{2}+2)(s^{4}+3s^{2}+6)}}\, .
	$$
	This function is decreasing and  $\lam_{0}$ is equal to its 
	value at $0$, i.e. $\lam_{0}=1/\sqrt{3}$. 
	The operator $E$  is
$L^{\Phi}$-dissipative, i.e.
	$$
	\Re \int_{\Om} \la \A\nabla u, \nabla\left(\, \frac{|u|^{2}(2+|u|^2)u}{(|u|^{2}+1)^{2}}\right)\ra dx \geq 0
	$$ 
	for any $u\in\Hspt^{1}(\Om)$ such that $|u|^{2}(2+|u|^2)u /(|u|^{2}+1)^{2} \in\Hspt^{1}(\Om)$, if and only 
	if
	$$
	| \la\Im \A (x)\, \xi,\xi\ra | 
\leq \sqrt{3} 
\, \la \Re \A(x) \, \xi,\xi\ra 
$$
for almost any $x\in\Om$ and for any $\xi\in\R^{N}$.
\end{example}

\begin{example}
	Let $\Phi(s)=s^{2}(s^{2}+2)/(s^{2}+1)-2 \log(s^{2}+1)$, i.e. 
	$\vf(s)=2\,s^{4}/(s^{2}+1)^{2}$. In this case
	$$
	\frac{|s\, \Phi''(s) - \Phi'(s)|}
 	{2\sqrt{s\, \Phi'(s)\, \Phi''(s)}} =
	\frac{2}{\sqrt{(s^{2}+1)(s^{2}+5)}}\, .
	$$
	This function is decreasing and  $\lam_{0}$ is equal to its 
	value at $0$, i.e. $\lam_{0}=2/\sqrt{5}$. 
	The operator $E$  is
$L^{\Phi}$-dissipative, i.e.
	$$
	\Re \int_{\Om} \la \A\nabla u, \nabla\left(\frac{|u|^{4}u}{(|u|^{2}+1)^{2}}\right)\ra dx \geq 0
	$$  
	for any $u\in\Hspt^{1}(\Om)$ such that $|u|^{4}u /(|u|^{2}+1)^{2} \in\Hspt^{1}(\Om)$,
	if and only 
	if
	$$
	2\, | \la\Im \A (x)\, \xi,\xi\ra | 
\leq \sqrt{5} 
\, \la \Re \A(x) \, \xi,\xi\ra 
$$
for almost any $x\in\Om$ and for any $\xi\in\R^{N}$.
\end{example}

{\bf Acknowledgements} The second author was supported by the RUDN University Program 5--100.

\end{document}